\theoremstyle{plain}
\newtheorem{theorem}{Theorem}
\newtheorem{lemma}{Lemma}
\newtheorem{corollary}{Corollary}
\theoremstyle{definition}
\newtheorem{definition}{Definition}
\newtheorem{example}{Example}
\newtheorem{remark}{Remark}
\newcommand\blfootnote[1]{%
\begingroup
\renewcommand\thefootnote{}\footnote{#1}%
\addtocounter{footnote}{-1}%
\endgroup
}
\begin{document}	

\title{A class of fractional differential equations via power\\ 
non-local and non-singular kernels: existence,\\ 
uniqueness and numerical approximations\blfootnote{This is a preprint 
of a paper whose final form is published in \emph{Physica D: Nonlinear Phenomena} (ISSN 0167-2789).
Submitted 19-Jan-2023; revised 15-May-2023; accepted for publication 11-Oct-2023.}}

\author{Hanaa Zitane\\
\texttt{h.zitane@ua.pt}
\and Delfim F. M. Torres\thanks{Corresponding author.}\\
\texttt{delfim@ua.pt}}

\date{Center for Research and Development in Mathematics and Applications (CIDMA),
Department of Mathematics, University of Aveiro, 3810-193 Aveiro, Portugal}

\maketitle


\begin{abstract}
We prove a useful formula and new properties 
for the recently introduced power fractional calculus 
with non-local and non-singular kernels. In particular, 
we prove a new version of Gronwall's inequality involving 
the power fractional integral; and we establish existence 
and uniqueness results for nonlinear power fractional 
differential equations using fixed point techniques. Moreover, 
based on Lagrange polynomial interpolation, we develop 
a new explicit numerical method in order to approximate 
the solutions of a rich class of fractional differential equations. 
The approximation error of the proposed numerical scheme is analyzed. 
For illustrative purposes, we apply our method to a fractional differential 
equation for which the exact solution is computed, as well as to a 
nonlinear problem for which no exact solution is known. The numerical
simulations show that the proposed method is very efficient, 
highly accurate and converges quickly.

\medskip

\noindent \textbf{Keywords:} fractional initial value problems; 
Gronwall's inequality; 
non-singular kernels; 
numerical methods; 
power fractional calculus. 

\medskip

\noindent \textbf{2020 Mathematics Subject Classification}: 
26A33, 26D15, 34A08, 34A12.
\end{abstract}


\section{Introduction}
\label{Sec:1}

Over the last decades, fractional differential equations (FDEs) 
have been used to model a large variety of physical, biological, 
and engineering problems \cite{Baleanu1,Srivastava}. Often, 
since most dynamical systems involve memory or hereditary effects, 
the non-locality properties of the fractional derivatives 
make them more accurate in modeling when compared with 
the classical local operators. That gave rise to the 
introduction of different kinds of non-local fractional derivatives 
with non-singular kernels \cite{ALRefai1,AtanBal,CapFab,Hattaf},
e.g., Caputo--Fabrizio \cite{CapFab}, Atangana--Baleanu \cite{AtanBal},  
weighted Atangana--Baleanu \cite{ALRefai1}, and Hattaf fractional 
derivatives \cite{Hattaf}. 

In 2022, a generalized version of all the previous non-local fractional
derivatives with non-singular kernels was introduced: the so-called 
power fractional derivative (PFD) \cite{PowerDerivative}. 
PFDs are based on the  generalized power Mittag--Leffler function, 
which contains a key ``power'' parameter $p$ that plays a very important 
role by enabling researchers, engineers and scientists, to select 
the adequate fractional derivative that models more accurately 
the real world phenomena under study. The authors of \cite{PowerDerivative}
presented the basic properties of the new power fractional derivative and integral. 
Moreover, they provided the Laplace transform corresponding to the PFD, 
which is then applied to solve a class of linear fractional differential equations.

The question of existence and uniqueness of nonlinear FDEs, 
as well as their various applications, have been 
discussed by many researchers: see, for instance, 
\cite{ALRefai,Cheneke,KHattaf,Jarad,Sene} 
and references cited therein. Analyzing the literature,
one may conclude that Gronwall's inequality 
and its extensions are one of the most fundamental tools in all such results. 
Indeed, several versions of this classical inequality, involving fractional integrals 
with non-singular kernels, have been provided in order to develop the quantitative 
and qualitative properties of the fractional differential equations to be investigated 
\cite{Gronwall,KHattaf,Jarad}. For example, in \cite{KHattaf}, Hattaf et al. establish 
a Gronwall's inequality in the framework of generalized Hattaf fractional integrals, 
while in \cite{Jarad} Alzabut et al. prove a Gronwall's inequality via Atangana--Baleanu 
fractional integrals. 

Motivated by the proceeding, the first main purpose of the present work 
is to derive a new version of Gronwall's inequality, as well as to study 
the existence and uniqueness of solutions for nonlinear fractional differential 
equations in the framework of more general power fractional operators 
with non-local and non-singular kernels. On the other hand, we develop
an appropriate numerical method to deal with power differential equations.

Numerical methods have been recognized as indispensable 
in fractional calculus \cite{MR3727142}. They provide 
powerful mathematical tools to solve nonlinear ordinary 
differential equations and fractional differential equations 
modeling complex real phenomena. Numerical methods are generally 
applied to predict the behavior of dynamical systems when all 
the used analytical methods fail, as it often the case. 
Various numerical schemes have been developed to approximate 
the solutions of different types of fractional differential equations 
with singular and non-singular kernels 
\cite{Hattaf2,HatNum,Shiri,ToufAtan}. 
For example, in \cite{Hattaf2} a numerical scheme, that recovers the 
classical Euler's method for ordinary differential equations, is proposed, 
in order to obtain numerical solutions of FDEs with generalized 
Hattaf fractional derivatives; in \cite{Shiri} collocation and 
predictor-corrector methods on piece-wise polynomial spaces 
are developed to solve tempered FDEs with Caputo fractional 
derivatives; while in \cite{ToufAtan} a numerical approximation 
for FDEs with Atangana--Baleanu fractional derivatives is investigated. 
However, to the best of our knowledge, no numerical methods have yet 
been developed to solve FDEs in the framework of power fractional derivatives. 
Consequently, the second main purpose of our work is to develop 
a new numerical scheme for approximating the solutions of such  
general and powerful differential equations.

The remainder of this article is organized as follows. 
Section~\ref{Sec2} states the necessary preliminaries, 
including the definitions of power fractional derivative 
and integral in the Caputo sense. In Section~\ref{Sec3}, 
we establish new and important formula and properties 
for the power fractional operators with non-local 
and non-singular kernels that we will need in the sequel. 
Section~\ref{Sec4} deals with a new more general version 
of Gronwall's inequality for the power fractional integral. 
Then we proceed with Section~\ref{Sec5}, which is devoted 
to the existence and uniqueness of solutions to FDEs involving PFDs. 
Section~\ref{NumAnaly} introduces a new numerical scheme with its 
error analysis, allowing one to investigate, in practical terms,
power FDEs. Applications and numerical simulations 
of our main results are given in Section~\ref{Sec7}. We end with
Section~\ref{sec:conc} of conclusions.


\section{Essential preliminaries and notations}
\label{Sec2}

In this section, we recall necessary definitions and results 
from the literature that will be useful in the sequel. 
Throughout this paper, $g\in H^{1}(a,b)$ is a sufficiently 
smooth function on $[a, b]$, with $a, b \in \mathbb{R}$, and  
$H^{1}(a,b)$ is the Sobolev space of order one. Also, 
$AC([ a, b])$ denotes the space of absolutely continuous functions 
$u$ on $[ a, b]$ endowed with the norm 
$\Arrowvert  u \Arrowvert= \underset{t\in[ a, b ]}{\sup}|u(t)|$. 
In addition, we adopt the notations
$$
\phi(\alpha):=\dfrac{1-\alpha}{N(\alpha)}, 
\quad \psi(\alpha):=\dfrac{\alpha}{N(\alpha)},
$$ 
where $\alpha \in [0, 1)$ and $N(\alpha)$ is a normalization positive 
function obeying $N(0)=N(1^{-})=1$ with 
$N(1^{-})=\underset{\alpha \rightarrow 1^{-}}{\lim}N(\alpha)$.

\begin{definition}[See \cite{PowerDerivative}]
\label{PowerMittag} 
The power Mittag--Leffler function is given by
\begin{equation}
\label{PMF}
{}^p \! E_{k,l}(s)=\displaystyle\sum_{n=0}^{+\infty}
\dfrac{(s\ln p)^{n}}{\Gamma(k n+l)},
\quad s\in\mathbb{C},
\end{equation}
where $k >0$, $l>0$, $p>0$,
and $\Gamma(\cdot)$ is the Gamma function \cite{mittag}.
\end{definition}

\begin{remark}
The term $\ln(p)$ that is introduced in Definition~\ref{PowerMittag} 
of power Mittag-Leffler function  
${}^p \! E_{k,l}(\cdot)$ allows, by taking particular cases, to obtain several important 
functions available in the literature, for example, the Mittag--Leffler function 
of one parameter ${}^e \! E_{k,1}(\cdot)$ \cite{mittag}, 
the Wiman function ${}^e \! E_{k,l}(\cdot)$ \cite{Wiman}, 
and those introduced by Prabhakar \cite{Prabhakar:71} and \cite{Salim:2009}.
\end{remark}

\begin{definition}[See \cite{PowerDerivative}]
\label{PowerDerivative}
Let $\alpha \in [0, 1)$, $\beta > 0$, $p>0$, and $g\in H^{1}(a,b)$. 
The power fractional derivative (PFD) of order $\alpha$, in the Caputo sense, 
of a function $g$ with respect to the weight function $\omega$, is defined by
\begin{equation}
\label{PFD}
{}^p{}^C \!D_{a,t,\omega}^{\alpha,\beta,p}g(t)=\dfrac{1}{\phi(\alpha)}\dfrac{1}{\omega(t)}
\int_{a}^{t} {}^p \! E_{\beta,1}\left(-\mu_{\alpha}(t-s)^{\beta}\right)(\omega g)'(s)
\, \mathrm{d}s,
\end{equation}
where $\mu_{\alpha}:=\dfrac{\alpha}{1-\alpha}$ 
and $\omega \in C^{1}([a,b])$ with $\omega>0$ on $[a,b]$.
\end{definition}

\begin{remark}
PFD is a fractional derivative with non-singular kernel 
while the classical Caputo fractional derivative
is a fractional operator with singular kernel. Therefore, PFDs belong 
to a different family and do not include Caputo derivatives as special cases.
\end{remark}

\begin{remark}
Note that the PFD \eqref{PFD} includes many  interesting fractional 
derivatives that exist in the literature, such as:
\begin{itemize}
\item if $p=e$, then we retrieve the generalized Hattaf
fractional derivative \cite{Hattaf} given by 
$$
{}^p{}^C \!D_{a,t,\omega}^{\alpha,\beta,e}g(t)=\dfrac{1}{\phi(\alpha)}
\dfrac{1}{\omega(t)} \int_{a}^{t} E_{\beta,1}\left(-\mu_{\alpha}
(t-s)^{\beta}\right)(\omega g)'(s)\, \mathrm{d}s;
$$
\item if $\beta=\alpha$, $p=e$ and $\omega(t) \equiv 1$, 
then we obtain the Atangana--Baleanu fractional derivative 
\cite{AtanBal} defined as
$$
{}^p{}^C \!D_{a,t,1}^{\alpha,\alpha,e}g(t)
=\dfrac{1}{\phi(\alpha)}\int_{a}^{t} 
E_{\alpha,1}\left(-\mu_{\alpha}
(t-s)^{\alpha}\right) g'(s)\, \mathrm{d}s;
$$
\item if $\beta=1$, $p=e$ and $\omega(t) \equiv 1$, then 
we get the Caputo--Fabrizio fractional derivative \cite{CapFab} given by 
$$
{}^p{}^C \!D_{a,t,1}^{\alpha,1,e}g(t)=\dfrac{1}{\phi(\alpha)}
\int_{a}^{t} \exp\left(-\mu_{\alpha}(t-s)\right) g'(s)\, \mathrm{d}s.
$$
\end{itemize}
\end{remark}

The power fractional integral associated with the power fractional derivative 
${}^p{}^C \!D_{a,t,\omega}^{\alpha,\beta,p}$ is given 
in Definition~\ref{PowerDIntegral}.

\begin{definition}[See \cite{PowerDerivative}]	
\label{PowerDIntegral}	
The power fractional integral (PFI) of order $\alpha$, 
of a function $g$ with respect to the weight function $\omega$, is given by
\begin{equation}
\label{PFI}
{}^p\!I_{a,t,\omega}^{\alpha,\beta,p}g(t)
=\phi(\alpha)g(t)+\ln p\cdot\psi(\alpha) {}^R\!{}^L\!I_{a,\omega}^{\beta}g(t),
\end{equation}
where ${}^R\!{}^L\!I_{a,\omega}^{\beta}$ denotes the standard weighted 
Riemann--Liouville fractional integral of order $\beta$ given by
$$
{}^R\!{}^L\!I_{a,\omega}^{\beta}g(t)
=\dfrac{1}{\Gamma(\beta)}\dfrac{1}{\omega(t)}
\int_{a}^{t}(t-s)^{\beta-1}(\omega g)(s)\, \mathrm{d}s.
$$	
\end{definition}

\begin{remark}
For $p=e$, the PFI \eqref{PFI} coincides with the generalized 
fractional integral introduced in~\cite{Hattaf}.
\end{remark}

The Gronwall's inequality in the framework of the weighted Riemann--Liouville 
fractional integral is given in \cite{KHattaf}.

\begin{lemma}[See \cite{KHattaf}]
\label{GHattaf}
Suppose $\beta>0$, $h$ and $u$ are non-negative 
and locally integrable functions on $[a,b)$, 
and $v$ is a non-negative, non-decreasing,
and continuous function on $[a,b)$ satisfying 
$v(t) \leq \lambda$, where $\lambda$ is a constant.  
If
$$ 
h(t)\leq u(t)+v(t){}^R\!{}^L\!I_{a,\omega}^{\beta}h(t),
$$
then
$$
h(t)\leq u(t)+\int_{a}^{t}\sum_{n=1}^{+\infty} 
\dfrac{(v(t))^{n}}{\Gamma(n\beta)}
(t-s)^{n\beta-1}u(s)\, \mathrm{d}s.
$$ 
\end{lemma}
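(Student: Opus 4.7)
The plan is to iterate the integral inequality and then pass to a limit, which is the classical strategy behind Gronwall-type estimates in the fractional setting. First, I would introduce the monotone linear operator $B\phi(t):=v(t)\cdot{}^R\!{}^L\!I_{a,\omega}^{\beta}\phi(t)$ acting on non-negative locally integrable functions; monotonicity follows from $v\geq 0$, $\omega>0$, and the non-negativity of the kernel $(t-s)^{\beta-1}$. The hypothesis then reads $h\leq u+Bh$, and substituting this inequality into itself $n$ times yields
\begin{equation*}
h(t)\leq \sum_{k=0}^{n} B^{k}u(t) + B^{n+1}h(t).
\end{equation*}

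Second, I would prove by induction on $k\geq 1$ a closed-form bound of the shape
\begin{equation*}
B^{k}u(t)\leq \frac{1}{\omega(t)}\int_{a}^{t}\frac{(v(t))^{k}}{\Gamma(k\beta)}(t-s)^{k\beta-1}\omega(s)u(s)\,\mathrm{d}s.
\end{equation*}
The inductive step rests on the Beta-function identity $\int_{s}^{t}(t-\tau)^{k\beta-1}(\tau-s)^{\beta-1}\,\mathrm{d}\tau=\frac{\Gamma(k\beta)\Gamma(\beta)}{\Gamma((k+1)\beta)}(t-s)^{(k+1)\beta-1}$, applied after Fubini to collapse the iterated integral, together with the monotonicity of $v$ (used to replace $v(s)$ by the larger $v(t)$ and pull it outside).

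Third, I would show the remainder vanishes: using $v\leq\lambda$, positivity and continuity of $\omega$ on $[a,b]$, and local integrability of $h$, one obtains an estimate of the form
\begin{equation*}
B^{n+1}h(t)\leq \frac{C\,\lambda^{n+1}(t-a)^{(n+1)\beta}}{\Gamma((n+1)\beta+1)},
\end{equation*}
which is the general term of the convergent Mittag--Leffler series $E_{\beta,1}\bigl(\lambda(t-a)^{\beta}\bigr)$ and hence tends to zero as $n\to\infty$. Letting $n\to\infty$ and invoking monotone convergence (all summands and integrands are non-negative) to exchange summation and integration produces the desired bound.

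The main obstacle is bookkeeping the weight $\omega$ through each application of $B$. The cleanest route is to set $\tilde{h}:=\omega h$ and $\tilde{u}:=\omega u$ so that the weighted Riemann--Liouville integral reduces to the classical unweighted one; then the standard Gronwall iteration applies verbatim to $\tilde h\leq\tilde u+\frac{v}{\Gamma(\beta)}\int_{a}^{t}(t-s)^{\beta-1}\tilde h(s)\,\mathrm{d}s$, and division by $\omega(t)$ at the very end recovers the statement for $h$ and $u$ (with $\omega$-factors absorbed into the constants, as in the formulation quoted from \cite{KHattaf}).
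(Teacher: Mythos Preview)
The paper does not supply its own proof of this lemma: it is quoted verbatim from \cite{KHattaf} and used as a black box in the proof of Theorem~\ref{theo1}. So there is no in-paper argument to compare against.

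Your proposed proof is the standard one and is correct in substance. Iterating the monotone operator $B$, bounding $B^{k}u$ via the Euler Beta identity, and showing that the remainder $B^{n+1}h$ is dominated by the general term of a Mittag--Leffler series is exactly how this class of fractional Gronwall inequalities is established (this is the approach of Ye--Gao--Ding and its many descendants, including \cite{KHattaf}). The change of variables $\tilde h=\omega h$, $\tilde u=\omega u$ is indeed the clean way to reduce the weighted Riemann--Liouville integral to the unweighted one.

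One point to tighten: after that reduction and the final division by $\omega(t)$, the bound you actually obtain is
\[
h(t)\leq u(t)+\frac{1}{\omega(t)}\int_{a}^{t}\sum_{n=1}^{\infty}\frac{(v(t))^{n}}{\Gamma(n\beta)}(t-s)^{n\beta-1}\omega(s)u(s)\,\mathrm{d}s,
\]
which carries the factor $\omega(s)/\omega(t)$ inside the integral. Saying these ``$\omega$-factors are absorbed into the constants'' is not quite accurate without an additional monotonicity or boundedness assumption on $\omega$; rather, this weighted form \emph{is} the correct conclusion in the weighted setting, and the displayed inequality in the lemma (with no $\omega$ on the right) should be read either as the special case $\omega\equiv 1$ or as implicitly carrying that ratio. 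This is a wrinkle in the statement as quoted, not a defect in your argument.
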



\section{New properties of the power fractional operators}
\label{Sec3}

In this section, we establish a new important formula and properties 
for the power fractional operators. They will be useful in the sequel
to achieve the main goals formulated in Section~\ref{Sec:1}.

\begin{lemma}
\label{impo1}
The power Mittag--Leffler function ${}^p \! E_{k,l}(s)$ 
is locally uniformly convergent for any $s\in\mathbb{C}$.
\end{lemma}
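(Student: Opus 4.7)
The plan is to reduce local uniform convergence on $\mathbb{C}$ to uniform convergence on every closed disk $\overline{B(0,R)}=\{s\in\mathbb{C}:|s|\le R\}$ and then apply the Weierstrass $M$-test. Since every compact $K\subset\mathbb{C}$ is contained in some such disk, this is sufficient.

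First I would fix $R>0$ and bound each term uniformly on $\overline{B(0,R)}$:
\[
\left|\frac{(s\ln p)^{n}}{\Gamma(kn+l)}\right|
\le \frac{(R\,|\ln p|)^{n}}{\Gamma(kn+l)}=:M_{n}(R).
\]
(If $p=1$ then $\ln p=0$ and only the $n=0$ term survives, so the statement is trivial; hence I may assume $p\neq 1$.) It then suffices to show $\sum_{n\ge 0}M_{n}(R)<\infty$.

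Next I would verify convergence of $\sum M_{n}(R)$ by the ratio test. Writing
\[
\frac{M_{n+1}(R)}{M_{n}(R)}
=R\,|\ln p|\cdot\frac{\Gamma(kn+l)}{\Gamma(kn+k+l)},
\]
and using that $k>0$ together with the asymptotic $\Gamma(x+k)/\Gamma(x)\sim x^{k}\to\infty$ as $x\to\infty$ (a direct consequence of Stirling's formula, which I would cite rather than reprove), the ratio tends to $0$ as $n\to\infty$. Hence $\sum_{n\ge 0}M_{n}(R)$ converges.

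The Weierstrass $M$-test then yields uniform (and absolute) convergence of the series \eqref{PMF} on $\overline{B(0,R)}$, and since $R>0$ is arbitrary, local uniform convergence throughout $\mathbb{C}$. I do not anticipate a real obstacle here; the only point to be slightly careful about is the dependence on the parameters: $k$, $l$, and $p$ are fixed by hypothesis ($k,l,p>0$), so the threshold $n$ after which $M_{n+1}/M_{n}<1/2$ depends on $R$, $k$, $l$, $p$ but not on $s\in\overline{B(0,R)}$, which is exactly what is needed for uniform convergence on that disk.
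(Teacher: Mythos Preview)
Your argument is correct: bounding on disks, applying the Weierstrass $M$-test, and using the ratio test together with the growth of $\Gamma$ is exactly the standard route for showing Mittag--Leffler--type series define entire functions. The paper does not give an independent proof here---it simply refers to Theorem~1 of \cite{PowerDerivative}---but that reference establishes the result by the same mechanism, so your approach coincides with what the paper invokes.
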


\begin{proof}
The proof is similar to the proof of Theorem~1 of \cite{PowerDerivative}.
\end{proof}

We prove a new formula for the power fractional derivative 
in the form of an infinite series of the standard weighted 
Riemann--Liouville fractional integral, which brings out 
more clearly the non-locality properties of the fractional 
derivative and, for certain computational purposes, 
is easier to handle than the original formula \eqref{PFD}.

\begin{lemma}
\label{key1}
The power fractional derivative 
${}^p{}^C \!D_{a,t,\omega}^{\alpha,\beta,p}$ 
can be expressed as follows:
\begin{equation*}
{}^p{}^C \!D_{a,t,\omega}^{\alpha,\beta,p}g(t)
=\dfrac{1}{\phi(\alpha)}\displaystyle\sum_{n=0}^{+\infty}\left(-\mu_{\alpha}
\ln p\right)^{n} {}^R\!{}^L\!I_{a,\omega}^{\beta n+1}
\left(\dfrac{(\omega g)'}{\omega}\right)(t),
\end{equation*}
where the series converges locally and uniformly in $t$ for any 
$a$, $\alpha$, $\beta$, $p$, $\omega$ and $g$ verifying 
the conditions laid out in Definition~\ref{PowerDerivative}.
\end{lemma}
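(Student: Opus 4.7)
The plan is to start directly from the defining integral \eqref{PFD} and expand the power Mittag--Leffler function inside the integrand using its series \eqref{PMF}, then interchange summation and integration, and finally recognize the resulting terms as weighted Riemann--Liouville integrals of order $\beta n+1$.

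First I would substitute
$$
{}^p \! E_{\beta,1}\bigl(-\mu_{\alpha}(t-s)^{\beta}\bigr)
=\sum_{n=0}^{+\infty}\dfrac{(-\mu_{\alpha}\ln p)^{n}(t-s)^{\beta n}}{\Gamma(\beta n+1)}
$$
into the definition \eqref{PFD} of ${}^p{}^C \!D_{a,t,\omega}^{\alpha,\beta,p}g(t)$. The key analytic step is to swap the sum with the integral $\int_a^t \dots (\omega g)'(s)\,\mathrm{d}s$. For this I would fix an arbitrary compact subinterval $[a,T]\subset [a,b]$, observe that as $s$ ranges over $[a,t]$ with $t\le T$ the argument $-\mu_{\alpha}(t-s)^{\beta}$ stays in a bounded subset of $\mathbb{C}$, and invoke Lemma~\ref{impo1} to conclude that the series converges uniformly there. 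Combined with the integrability of $(\omega g)'$ on $[a,b]$ (which follows from $g\in H^1(a,b)$ and $\omega\in C^1([a,b])$), this justifies the term-by-term integration.

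After the interchange, each term has the form
$$
\dfrac{(-\mu_{\alpha}\ln p)^{n}}{\Gamma(\beta n+1)}\cdot\dfrac{1}{\omega(t)}\int_{a}^{t}(t-s)^{\beta n}(\omega g)'(s)\,\mathrm{d}s,
$$
which I would recognize as $(-\mu_{\alpha}\ln p)^{n}\,{}^R\!{}^L\!I_{a,\omega}^{\beta n+1}\!\bigl((\omega g)'/\omega\bigr)(t)$ by taking $h=(\omega g)'/\omega$ in the defining formula of the weighted Riemann--Liouville integral given in Definition~\ref{PowerDIntegral}, since then $\omega h=(\omega g)'$ and the exponent $\beta-1$ there becomes $\beta n+1-1=\beta n$. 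Factoring out $1/\phi(\alpha)$ gives the claimed identity.

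The main obstacle I anticipate is purely the justification of the series--integral interchange; the algebra that follows is routine. Lemma~\ref{impo1} provides exactly the locally uniform convergence needed, so once that is invoked on $[a,T]\times[a,T]$ (noting that $\mu_{\alpha}(t-s)^{\beta}$ is bounded on this compact set and that the partial sums of the Mittag--Leffler series are dominated uniformly there), both the interchange and the local uniform convergence in $t$ of the resulting series follow at once, completing the proof.
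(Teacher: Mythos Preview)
Your proposal is correct and follows essentially the same approach as the paper: expand ${}^p\!E_{\beta,1}$ via its power series, invoke Lemma~\ref{impo1} for the locally uniform convergence that justifies interchanging sum and integral, and then identify each resulting term as the weighted Riemann--Liouville integral ${}^R\!{}^L\!I_{a,\omega}^{\beta n+1}\bigl((\omega g)'/\omega\bigr)(t)$. If anything, your write-up is more explicit about the analytic justification of the interchange than the paper's own proof.
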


\begin{proof}
The power Mittag--Leffler function ${}^p \! E_{k,l}(s)$ 
is an entire function of $s$. Since it is locally  uniformly 
convergent in the whole complex plane (see Lemma~\ref{impo1}), 
then the PFD may be rewritten as follows:
\begin{equation*}	
\begin{split}
{}^p{}^C \!D_{a,t,\omega}^{\alpha,\beta,p}g(t)
&=\dfrac{1}{\phi(\alpha)}\dfrac{1}{\omega(t)}
\displaystyle\sum_{n=0}^{+\infty}\dfrac{\left(-\mu_{\alpha}
\ln p\right)^{n}}{\Gamma(\beta n+1)}\int_{a}^{t}
(t-x)^{\beta n}(\omega g)'(x)\, \mathrm{d}x \\
&=\dfrac{1}{\phi(\alpha)}\displaystyle\sum_{n=0}^{+\infty}
\left(-\mu_{\alpha}\ln p\right)^{n}\dfrac{1}{\Gamma(\beta n+1)}
\dfrac{1}{\omega(t)}\int_{a}^{t}
(t-x)^{\beta n}(\omega g)'(x)\, \mathrm{d}x\\
&=\dfrac{1}{\phi(\alpha)}\displaystyle\sum_{n=0}^{+\infty}
\left(-\mu_{\alpha}\ln p\right)^{n} {}^R\!{}^L\!I_{a,\omega}^{\beta n+1}
\left(\dfrac{(\omega g)'}{\omega}\right)(t),
\end{split}	
\end{equation*}	
which completes the proof.
\end{proof}

\begin{theorem}
\label{PFDandPFI} 
Let $\alpha \in [0, 1)$, $\beta > 0$, $p>0$,
and $g \in H^{1}(a,b)$. Then, 
\begin{itemize}
\item[(i)] ${}^p{}^C \!D_{a,t,\omega}^{\alpha,\beta,p}
\left({}^p\!I_{a,t,\omega}^{\alpha,\beta,p}g\right)(t)
=g(t)-\dfrac{(\omega g)(a)}{\omega(t)}$;
\item[(ii)] ${}^p\!I_{a,t,\omega}^{\alpha,\beta,p}
\left({}^p{}^C \!D_{a,t,\omega}^{\alpha,\beta,p}g\right)(t)
=g(t)-\dfrac{(\omega g)(a)}{\omega(t)}$.
\end{itemize}	
\end{theorem}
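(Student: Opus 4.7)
The plan is to attack both identities by leveraging the series expansion from Lemma~\ref{key1}, which represents the power fractional derivative as the locally uniformly convergent series
\[
{}^p{}^C \!D_{a,t,\omega}^{\alpha,\beta,p}u(t)
=\dfrac{1}{\phi(\alpha)}\sum_{n=0}^{+\infty}(-\mu_\alpha\ln p)^n\,{}^R\!{}^L\!I_{a,\omega}^{\beta n+1}\!\left(\dfrac{(\omega u)'}{\omega}\right)\!(t).
\]
This reduces each identity to a symbolic manipulation of series of weighted Riemann--Liouville integrals, for which I rely on two structural ingredients: the semigroup property ${}^R\!{}^L\!I_{a,\omega}^{\beta_1}\circ{}^R\!{}^L\!I_{a,\omega}^{\beta_2}={}^R\!{}^L\!I_{a,\omega}^{\beta_1+\beta_2}$ and the normalization identity $\psi(\alpha)/\phi(\alpha)=\mu_\alpha$, which makes the prefactors of the PFI and PFD compatible.

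For part (ii), writing $f:=(\omega g)'/\omega$ and applying the PFI termwise to the series for ${}^p{}^C \!D g$, the $\phi(\alpha)g$ part of the PFI reproduces the original series, while the $\ln p\cdot\psi(\alpha){}^R\!{}^L\!I_{a,\omega}^{\beta}$ part, by the semigroup rule, yields the same series with every RL order shifted by $\beta$:
\[
{}^p\!I_{a,t,\omega}^{\alpha,\beta,p}\bigl({}^p{}^C \!D_{a,t,\omega}^{\alpha,\beta,p}g\bigr)(t)
= \sum_{n=0}^{+\infty}(-\mu_\alpha\ln p)^n\,{}^R\!{}^L\!I_{a,\omega}^{\beta n+1}f(t)
+ \mu_\alpha\ln p\sum_{n=0}^{+\infty}(-\mu_\alpha\ln p)^n\,{}^R\!{}^L\!I_{a,\omega}^{\beta(n+1)+1}f(t).
\]
Re-indexing $m=n+1$ in the second sum turns it into $-\sum_{m\geq 1}(-\mu_\alpha\ln p)^m\,{}^R\!{}^L\!I_{a,\omega}^{\beta m+1}f(t)$, which cancels the $n\geq 1$ portion of the first sum. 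The sole survivor is the $n=0$ term ${}^R\!{}^L\!I_{a,\omega}^{1}f(t)=\dfrac{1}{\omega(t)}\int_a^t(\omega g)'(s)\,\mathrm{d}s=g(t)-\dfrac{(\omega g)(a)}{\omega(t)}$.

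Part (i) follows the same template applied to $h:={}^p\!I_{a,t,\omega}^{\alpha,\beta,p}g$. First I would prepare $(\omega h)'/\omega$ for insertion into the series: using the identity $\omega(t)\,{}^R\!{}^L\!I_{a,\omega}^{\beta}g(t)={}^R\!{}^L\!I_{a}^{\beta}(\omega g)(t)$ and an integration by parts in the Riemann--Liouville kernel gives
\[
\dfrac{(\omega h)'(t)}{\omega(t)}
=\phi(\alpha)f(t)+\ln p\cdot\psi(\alpha)\left[\dfrac{(t-a)^{\beta-1}(\omega g)(a)}{\Gamma(\beta)\,\omega(t)}+{}^R\!{}^L\!I_{a,\omega}^{\beta}f(t)\right].
\]
Substituting into Lemma~\ref{key1} and evaluating ${}^R\!{}^L\!I_{a,\omega}^{\beta n+1}$ on the explicit boundary term by the Beta-function identity $\int_a^t(t-s)^{\beta n}(s-a)^{\beta-1}\,\mathrm{d}s=B(\beta n+1,\beta)(t-a)^{\beta(n+1)}$ puts the series in closed form, and an index shift analogous to the one in part (ii) triggers the telescoping cancellation that collapses the expression to the claimed right-hand side.

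The main obstacle will be the bookkeeping in part (i): the boundary contribution $(t-a)^{\beta-1}(\omega g)(a)/(\Gamma(\beta)\omega(t))$ reappears at every order of the series after the RL integrals act on it, and all these boundary pieces must be tracked carefully through the reindexing in order to verify that the telescoping closes cleanly. At every step the interchanges of summation with integration (and with the RL operator) are legitimate thanks to the local uniform convergence of the power Mittag--Leffler series guaranteed by Lemma~\ref{impo1}.
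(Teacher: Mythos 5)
Your treatment of part (ii) is exactly the paper's argument: expand the derivative via Lemma~\ref{key1}, use the semigroup property of the weighted Riemann--Liouville integrals together with $\psi(\alpha)/\phi(\alpha)=\mu_\alpha$, and telescope down to ${}^R\!{}^L\!I_{a,\omega}^{1}\bigl((\omega g)'/\omega\bigr)(t)=g(t)-(\omega g)(a)/\omega(t)$. That part is correct and needs no further comment.

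The genuine gap is in part (i), at precisely the step you deferred: the boundary contributions do \emph{not} telescope away. Your preparatory formula for $(\omega h)'/\omega$ with $h={}^p\!I_{a,t,\omega}^{\alpha,\beta,p}g$ is right, but if you push your own plan through, the Beta identity gives
\begin{equation*}
{}^R\!{}^L\!I_{a,\omega}^{\beta n+1}\left(\dfrac{(s-a)^{\beta-1}(\omega g)(a)}{\Gamma(\beta)\,\omega(s)}\right)(t)
=\dfrac{(\omega g)(a)\,(t-a)^{\beta(n+1)}}{\omega(t)\,\Gamma(\beta(n+1)+1)},
\end{equation*}
with accompanying coefficient $\tfrac{1}{\phi(\alpha)}(-\mu_\alpha\ln p)^{n}\,\ln p\cdot\psi(\alpha)=-(-\mu_\alpha\ln p)^{n+1}$; summing over $n\ge 0$, these pieces add up to $-\tfrac{(\omega g)(a)}{\omega(t)}\bigl[{}^p\!E_{\beta,1}\bigl(-\mu_{\alpha}(t-a)^{\beta}\bigr)-1\bigr]$, while the remaining series telescopes to $g(t)-\tfrac{(\omega g)(a)}{\omega(t)}$ as in (ii). So your route terminates at
\begin{equation*}
{}^p{}^C\!D_{a,t,\omega}^{\alpha,\beta,p}\bigl({}^p\!I_{a,t,\omega}^{\alpha,\beta,p}g\bigr)(t)
=g(t)-\dfrac{(\omega g)(a)}{\omega(t)}\,{}^p\!E_{\beta,1}\bigl(-\mu_{\alpha}(t-a)^{\beta}\bigr),
\end{equation*}
which coincides with the claimed right-hand side only when $(\omega g)(a)=0$; the asserted ``clean closing'' of the telescoping is therefore not a bookkeeping detail but a step that fails. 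Note that the paper's own proof reaches the stated formula by writing ${}^R\!{}^L\!I_{a,\omega}^{\beta n+1}\bigl((\omega\,{}^R\!{}^L\!I_{a,\omega}^{\beta}g)'/\omega\bigr)={}^R\!{}^L\!I_{a,\omega}^{\beta(n+1)}g-(\omega g)(a)\,{}^R\!{}^L\!I_{a,\omega}^{\beta(n+1)}(1/\omega)$, i.e.\ it inserts the boundary value $(\omega g)(a)$ where the relevant value is $(\omega\,{}^R\!{}^L\!I_{a,\omega}^{\beta}g)(a)=0$; your more careful decomposition exposes exactly this discrepancy (test $g\equiv 1$, $\omega\equiv 1$, $p=e$, $\beta=\alpha$: the composition equals $1-E_{\alpha}(-\mu_{\alpha}(t-a)^{\alpha})\neq 0$). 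So, as written, your proposal cannot deliver item (i) in the stated form; it delivers it with the extra power Mittag--Leffler factor, or under the additional hypothesis $(\omega g)(a)=0$.
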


\begin{proof}
We begin by proving $(i)$. According to Lemma~\ref{key1}, one has
$$
{}^p{}^C \!D_{a,t,\omega}^{\alpha,\beta,p}
\left({}^p\!I_{a,t,\omega}^{\alpha,\beta,p}g\right)(t)
=\dfrac{1}{\phi(\alpha)}\displaystyle\sum_{n=0}^{+\infty}
\left(-\mu_{\alpha}\ln p\right)^{n} {}^R\!{}^L\!I_{a,\omega}^{\beta n+1}
\left(\dfrac{\left(\omega \left({}^p\!I_{a,t,\omega}^{\alpha,\beta,p}
g\right)\right)'}{\omega}\right)(t).
$$
From Definition~\ref{PowerDIntegral}, it follows that
\begin{equation*}	
\begin{split}
{}^p{}^C \!D_{a,t,\omega}^{\alpha,\beta,p}
\left({}^p\!I_{a,t,\omega}^{\alpha,\beta,p}g\right)(t)
&=\dfrac{1}{\phi(\alpha)}\displaystyle\sum_{n=0}^{+\infty}
\left(-\mu_{\alpha}\ln p\right)^{n} {}^R\!{}^L\!I_{a,\omega}^{\beta n+1}
\left[\dfrac{\phi(\alpha)(\omega g)'}{\omega}+\dfrac{\ln p
\cdot\psi(\alpha)(\omega {}^R\!{}^L\!I_{a,\omega}^{\beta}g)'}{\omega}\right](t)\\
&=\displaystyle\sum_{n=0}^{+\infty}\left(-\mu_{\alpha}
\ln p\right)^{n} {}^R\!{}^L\!I_{a,\omega}^{\beta n+1}
\left(\dfrac{(\omega g)'}{\omega}\right)(t)\\
&\qquad+ \mu_{\alpha}\ln p\displaystyle\sum_{n=0}^{+\infty}
\left(-\mu_{\alpha}\ln p\right)^{n}{}^R\!{}^L\!I_{a,\omega}^{\beta n+1}
\left(\dfrac{(\omega {}^R\!{}^L\!I_{a,\omega}^{\beta}g)'}{\omega}\right)(t).
\end{split}	
\end{equation*}
Therefore,
\begin{equation*}
\begin{split}
{}^p{}^C \!D_{a,t,\omega}^{\alpha,\beta,p}
\left({}^p\!I_{a,t,\omega}^{\alpha,\beta,p}g\right)(t)
&=\displaystyle\sum_{n=0}^{+\infty}\left(-\mu_{\alpha}
\ln p\right)^{n}\left[{}^R\!{}^L\!I_{a,\omega}^{\beta n}
g(t)-(\omega g)(a){}^R\!{}^L\!I_{a,\omega}^{\beta n}
\left(\dfrac{1}{\omega}\right)(t)\right]\\
&\quad-\displaystyle\sum_{n=0}^{+\infty}
\left(-\mu_{\alpha}\ln p\right)^{n+1}
\left[{}^R\!{}^L\!I_{a,\omega}^{\beta (n+1)}
g(t)-(\omega g)(a){}^R\!{}^L\!I_{a,\omega}^{\beta (n+1)}
\left(\dfrac{1}{\omega}\right)(t)\right]\\
&=\displaystyle\sum_{n=0}^{+\infty}\left(-\mu_{\alpha}
\ln p\right)^{n}\left[{}^R\!{}^L\!I_{a,\omega}^{\beta n}
g(t)-(\omega g)(a){}^R\!{}^L\!I_{a,\omega}^{\beta n}
\left(\dfrac{1}{\omega}\right)(t)\right]\\
&\quad-\displaystyle\sum_{n=1}^{+\infty}
\left(-\mu_{\alpha}\ln p\right)^{n}
\left[{}^R\!{}^L\!I_{a,\omega}^{\beta n}
g(t)-(\omega g)(a){}^R\!{}^L\!I_{a,\omega}^{\beta n}
\left(\dfrac{1}{\omega}\right)(t)\right]\\
&={}^R\!{}^L\!I_{a,\omega}^{0}g(t)
-(\omega g)(a){}^R\!{}^L\!I_{a,\omega}^{0}
\left(\dfrac{1}{\omega}\right)(t)\\
&=g(t)-\dfrac{(\omega g)(a)}{\omega(t)}.
\end{split}	
\end{equation*}
Now, we prove $(ii)$. According to Definition~\ref{PowerDIntegral}, one has
$$
{}^p\!I_{a,t,\omega}^{\alpha,\beta,p}\left({}^p{}^C 
\!D_{a,t,\omega}^{\alpha,\beta,p}g\right)(t)
=\phi(\alpha){}^p{}^C \!D_{a,t,\omega}^{\alpha,\beta,p}g(t)
+\ln p\cdot\psi(\alpha) {}^R\!{}^L\!I_{a,\omega}^{\beta}
\left({}^p{}^C \!D_{a,t,\omega}^{\alpha,\beta,p}g\right)(t).
$$
By applying Lemma~\ref{key1}, we obtain that
\begin{equation*}
\begin{split}
{}^p\!I_{a,t,\omega}^{\alpha,\beta,p}&\left({}^p{}^C 
\!D_{a,t,\omega}^{\alpha,\beta,p}g\right)(t)\\
&=\displaystyle\sum_{n=0}^{+\infty}\left(-\mu_{\alpha}
\ln p\right)^{n} {}^R\!{}^L\!I_{a,\omega}^{\beta n+1}
\left(\dfrac{(\omega g)'}{\omega}\right)(t)\\
&\quad+\mu_{\alpha}\ln p {}^R\!{}^L\!I_{a,\omega}^{\beta}
\left[\displaystyle\sum_{n=0}^{+\infty}\left(-\mu_{\alpha}
\ln p\right)^{n} {}^R\!{}^L\!I_{a,\omega}^{\beta n+1}
\left(\dfrac{(\omega g)'}{\omega}\right)(t)\right]\\
&=\displaystyle\sum_{n=0}^{+\infty}\left(-\mu_{\alpha}
\ln p\right)^{n} {}^R\!{}^L\!I_{a,\omega}^{\beta n+1}
\left(\dfrac{(\omega g)'}{\omega}\right)(t)
- \displaystyle\sum_{n=0}^{+\infty}\left(-\mu_{\alpha}
\ln p\right)^{n+1} {}^R\!{}^L\!I_{a,\omega}^{\beta (n+1)+1}
\left(\dfrac{(\omega g)'}{\omega}\right)(t)\\
&=\displaystyle\sum_{n=0}^{+\infty}\left(-\mu_{\alpha}
\ln p\right)^{n} {}^R\!{}^L\!I_{a,\omega}^{\beta n+1}
\left(\dfrac{(\omega g)'}{\omega}\right)(t)
- \displaystyle\sum_{n=1}^{+\infty}\left(-\mu_{\alpha}
\ln p\right)^{n} {}^R\!{}^L\!I_{a,\omega}^{\beta n+1}
\left(\dfrac{(\omega g)'}{\omega}\right)(t)\\
&={}^R\!{}^L\!I_{a,\omega}^{1}\left(\dfrac{(\omega g)'}{\omega}\right)(t)\\
&=\dfrac{1}{\omega(t)}\int_{a}^{t}(\omega g)'(x)\, \mathrm{d}x\\
&=g(t)-\dfrac{(\omega g)(a)}{\omega(t)}.
\end{split}	
\end{equation*}	
The proof is complete.
\end{proof}

\begin{remark}
Theorem~\ref{PFDandPFI} proves that the power fractional derivative 
and integral are commutative operators.	
\end{remark}

\begin{remark}
If we let $p=e$ in Theorem~\ref{PFDandPFI}, 
then we obtain the results presented in Theorem~3 of \cite{Hattaf1} 
for the generalized Hattaf fractional operators.
\end{remark}

As a corollary of our Theorem~\ref{PFDandPFI},  
we extend the Newton--Leibniz formula 
proved in \cite{Leibniz}.

\begin{corollary}
The power fractional derivative and integral satisfy the Newton--Leibniz formula
$$
{}^p{}^C \!D_{a,t,1}^{\alpha,\beta,p}
\left({}^p\!I_{a,t,1}^{\alpha,\beta,p}g\right)(t)
={}^p\!I_{a,t,1}^{\alpha,\beta,p}\left({}^p{}^C 
\!D_{a,t,1}^{\alpha,\beta,p}g\right)(t)
=g(t)-g(a).
$$
\end{corollary}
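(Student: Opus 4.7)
The plan is to obtain this corollary as an immediate specialization of Theorem~\ref{PFDandPFI} to the case of the trivial weight $\omega(t) \equiv 1$. Since Theorem~\ref{PFDandPFI} requires $\omega \in C^{1}([a,b])$ with $\omega>0$, the constant function $\omega \equiv 1$ is admissible, so both identities in the theorem apply.

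First I would verify that with $\omega(t) \equiv 1$ the right-hand side collapses correctly. Both definitions \eqref{PFD} and \eqref{PFI} reduce in this setting to the unweighted power operators ${}^p{}^C \!D_{a,t,1}^{\alpha,\beta,p}$ and ${}^p\!I_{a,t,1}^{\alpha,\beta,p}$, since the factor $1/\omega(t)$ and the multiplier $\omega$ inside the integrals become $1$. Consequently $(\omega g)(a) = g(a)$ and $(\omega g)(a)/\omega(t) = g(a)$.

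Substituting this into parts~(i) and~(ii) of Theorem~\ref{PFDandPFI} yields
\begin{equation*}
{}^p{}^C \!D_{a,t,1}^{\alpha,\beta,p}\left({}^p\!I_{a,t,1}^{\alpha,\beta,p}g\right)(t) = g(t) - g(a)
\end{equation*}
and
\begin{equation*}
{}^p\!I_{a,t,1}^{\alpha,\beta,p}\left({}^p{}^C \!D_{a,t,1}^{\alpha,\beta,p}g\right)(t) = g(t) - g(a),
\end{equation*}
which are precisely the two equalities claimed. There is no genuine obstacle here: the only thing to check is that the trivial weight is a legitimate choice in Theorem~\ref{PFDandPFI}, after which the result is a one-line substitution. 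The proof can therefore be written in two or three lines, simply invoking Theorem~\ref{PFDandPFI} and setting $\omega \equiv 1$.
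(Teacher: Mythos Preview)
Your proposal is correct and follows exactly the paper's approach: the paper's proof is the single line ``Follows from Theorem~\ref{PFDandPFI} with $\omega(t)\equiv 1$.'' Your version is simply a slightly more expanded verification of this substitution.
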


\begin{proof}
Follows from Theorem~\ref{PFDandPFI} 
with $\omega(t) \equiv 1$.
\end{proof}


\section{Gronwall's inequality via PFI}
\label{Sec4}

In this section we establish a Gronwall's inequality in the framework 
of the power fractional integral. Our proof uses Lemma~\ref{GHattaf}.

\begin{theorem}
\label{theo1} 
Let $\alpha \in [0, 1)$, $\beta>0$, and $p>0$. 
Suppose $h$ and $u$ are non-negative 
and locally integrable functions on $[a,b)$, 
and $v$ is a non-negative, non-decreasing,
and continuous function on $[a,b)$ satisfying 
$v(t) \leq \lambda$, where $\lambda$ is a constant 
such that $1-\phi(\alpha)\lambda>0$. If
\begin{equation}
\label{CND1}
h(t)\leq u(t)+v(t){}^p\!I_{a,t,\omega}^{\alpha,\beta,p}h(t),
\end{equation}
then	
\begin{equation}
\label{Gronwall}
h(t)\leq \dfrac{u(t)}{1-\phi(\alpha)v(t)}
+ \int_{a}^{t}\sum_{n=1}^{+\infty}
\dfrac{( \ln p\cdot\psi(\alpha)v(t))^{n}u(s)(t-s)^{n\beta-1}}{
\Gamma(n\beta)(1-\phi(\alpha)v(t))^{n}(1-\phi(\alpha)v(s))}
\, \mathrm{d}s.
\end{equation}
\end{theorem}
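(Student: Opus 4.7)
The plan is to reduce the power-fractional Gronwall inequality to the weighted Riemann--Liouville Gronwall inequality already available as Lemma~\ref{GHattaf}. The natural entry point is Definition~\ref{PowerDIntegral}, which expresses the PFI as a weighted sum of the identity and the weighted Riemann--Liouville integral. Substituting this into \eqref{CND1} gives
\[
h(t) \leq u(t) + \phi(\alpha)\,v(t)\,h(t) + \ln p\cdot\psi(\alpha)\,v(t)\,{}^R\!{}^L\!I_{a,\omega}^{\beta}h(t),
\]
and gathering the $h(t)$ terms on the left produces $(1-\phi(\alpha)v(t))\,h(t)$, which may be divided out because the hypothesis $1-\phi(\alpha)\lambda>0$ together with $v(t)\leq\lambda$ guarantees $1-\phi(\alpha)v(t)\geq 1-\phi(\alpha)\lambda>0$. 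The resulting inequality
\[
h(t) \leq \underbrace{\dfrac{u(t)}{1-\phi(\alpha)v(t)}}_{=:\,\tilde u(t)}
+ \underbrace{\dfrac{\ln p\cdot\psi(\alpha)\,v(t)}{1-\phi(\alpha)v(t)}}_{=:\,\tilde v(t)} \,{}^R\!{}^L\!I_{a,\omega}^{\beta}h(t)
\]
is precisely of the form handled by Lemma~\ref{GHattaf}.

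The second step is to verify that the new data $(\tilde u,\tilde v)$ satisfy the hypotheses of Lemma~\ref{GHattaf}. For $\tilde u$ this is immediate: non-negativity and local integrability transfer from $u$ thanks to the uniform positive lower bound on $1-\phi(\alpha)v$. For $\tilde v$, one uses the monotonicity of $v$ to deduce that $1-\phi(\alpha)v(\cdot)$ is positive, continuous and non-increasing, hence its reciprocal is non-decreasing; multiplying by the non-negative non-decreasing continuous factor $\ln p\cdot\psi(\alpha)\,v(t)$ shows that $\tilde v$ is non-negative, non-decreasing and continuous on $[a,b)$, with the uniform bound $\tilde v(t)\leq \ln p\cdot\psi(\alpha)\lambda/(1-\phi(\alpha)\lambda)$. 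I expect this verification — in particular tracking the monotonicity and the positive sign through the quotient — to be the main source of bookkeeping, and implicitly it also makes clear that the natural range for $p$ in this theorem is $p\geq 1$, since $\ln p<0$ would break the non-negativity of $\tilde v$ required by Lemma~\ref{GHattaf}.

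The final step is cosmetic: apply Lemma~\ref{GHattaf} to obtain
\[
h(t) \leq \tilde u(t) + \int_{a}^{t}\sum_{n=1}^{+\infty}\dfrac{(\tilde v(t))^{n}}{\Gamma(n\beta)}(t-s)^{n\beta-1}\tilde u(s)\,\mathrm{d}s,
\]
then unfold $\tilde v(t)^{n}=(\ln p\cdot\psi(\alpha)v(t))^{n}/(1-\phi(\alpha)v(t))^{n}$ and $\tilde u(s)=u(s)/(1-\phi(\alpha)v(s))$ inside the integral. This reassembles exactly the right-hand side of \eqref{Gronwall}, completing the proof. No exchange of summation and integration is needed beyond what is already justified in Lemma~\ref{GHattaf}, since the whole reduction is algebraic.
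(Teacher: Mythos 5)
Your proposal is correct and follows essentially the same route as the paper's own proof: substitute the PFI formula \eqref{PFI} into \eqref{CND1}, divide by $1-\phi(\alpha)v(t)$ (justified by $1-\phi(\alpha)\lambda>0$), apply Lemma~\ref{GHattaf} with $U(t)=u(t)/(1-\phi(\alpha)v(t))$ and $V(t)=\ln p\cdot\psi(\alpha)v(t)/(1-\phi(\alpha)v(t))$, and unfold. Your verification that $V$ meets the hypotheses of Lemma~\ref{GHattaf}, and your observation that non-negativity of $\ln p$ implicitly requires $p\geq 1$, are in fact more careful than the paper's brief justification.
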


\begin{proof}
By virtue of condition \eqref{CND1} 
and the PFI formula \eqref{PFI}, one has 
$$
h(t)\leq u(t)+\phi(\alpha)v(t)h(t)+\ln p
\cdot\psi(\alpha)v(t) {}^R\!{}^L\!I_{a,\omega}^{\beta}h(t),
$$
which leads to 
$$
h(t)\leq \dfrac{u(t)}{1-\phi(\alpha)v(t)}
+\dfrac{\ln p\cdot\psi(\alpha)v(t)}{1-\phi(\alpha)
v(t)} {}^R\!{}^L\!I_{a,\omega}^{\beta}h(t).
$$
Let $V(t)=\dfrac{\ln p\cdot\psi(\alpha)v(t)}{1-\phi(\alpha)v(t)}$. 
This function is non-negative and non-decreasing and, by applying 
the result of Lemma~\ref{GHattaf} with 
$U(t)=\dfrac{u(t)}{1-\phi(\alpha)v(t)}$, it follows that
$$
h(t)\leq U(t)+\int_{a}^{t}\sum_{n=1}^{+\infty} 
\dfrac{(V(t))^{n}}{\Gamma(n\beta)}
(t-s)^{n\beta-1}U(s)\, \mathrm{d}s.
$$   
Hence,
$$ 
h(t)\leq \dfrac{u(t)}{1-\phi(\alpha)v(t)}
+ \int_{a}^{t}\sum_{n=1}^{+\infty}
\dfrac{( \ln p\cdot\psi(\alpha)v(t))^{n}u(s)
(t-s)^{n\beta-1}}{\Gamma(n\beta)(1-\phi(\alpha)
v(t))^{n}(1-\phi(\alpha)v(s))}\, \mathrm{d}s,
$$
and the proof is complete.
\end{proof}

\begin{corollary}
\label{GronwallMettag}
Under the hypotheses of Theorem~\ref{theo1}, 
assume further that $v(t)$ is a non-decreasing 
function on $[a, b)$. Then,
$$
h(t)\leq \dfrac{u(t)}{1-\phi(\alpha)v(t)}
{}^p \! E_{\alpha,\beta}\left(
\dfrac{\psi(\alpha)v(t)  
(t-a)^{\beta}}{1-\phi(\alpha)v(t)}\right).
$$	
\end{corollary}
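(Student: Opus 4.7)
The plan is to start directly from the conclusion \eqref{Gronwall} of Theorem~\ref{theo1} and reduce it to a closed form by evaluating the integral and recognizing the resulting series as a power Mittag--Leffler function. (I note that the stated additional hypothesis should be read as \emph{$u$ being non-decreasing}, since Theorem~\ref{theo1} already imposes monotonicity on $v$; monotonicity of $u$ is exactly what is needed to pull $u(s)$ outside the integral.)

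First, I would exploit monotonicity to separate the $s$- and $t$-dependence. Because $v$ is non-decreasing and $1-\phi(\alpha)v(t)>0$, we have $v(s)\leq v(t)$, hence $1-\phi(\alpha)v(s)\geq 1-\phi(\alpha)v(t)>0$, so
$$
\frac{u(s)}{1-\phi(\alpha)v(s)}\leq \frac{u(t)}{1-\phi(\alpha)v(t)}\quad\text{for }a\leq s\leq t.
$$
Substituting this bound into \eqref{Gronwall}, every factor depending only on $t$ can be pulled outside, leaving
$$
h(t)\leq \frac{u(t)}{1-\phi(\alpha)v(t)}\left[1+\sum_{n=1}^{+\infty}\frac{(\ln p\cdot \psi(\alpha)v(t))^{n}}{\Gamma(n\beta)(1-\phi(\alpha)v(t))^{n}}\int_{a}^{t}(t-s)^{n\beta-1}\,\mathrm{d}s\right].
$$
Interchanging sum and integral here is legitimate by the local uniform convergence established in Lemma~\ref{impo1}.

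Next I would evaluate the inner integral via the elementary identity $\int_{a}^{t}(t-s)^{n\beta-1}\,\mathrm{d}s=(t-a)^{n\beta}/(n\beta)$ and use $n\beta\,\Gamma(n\beta)=\Gamma(n\beta+1)$ to fold the extra $n\beta$ into the Gamma factor. After grouping the $t$-dependent quantities together, the bracketed expression becomes
$$
\sum_{n=0}^{+\infty}\frac{(\ln p)^{n}}{\Gamma(n\beta+1)}\left(\frac{\psi(\alpha)v(t)(t-a)^{\beta}}{1-\phi(\alpha)v(t)}\right)^{n},
$$
where the $n=0$ contribution comes from the leading $1$ produced by the first term of \eqref{Gronwall}. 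Comparing with Definition~\ref{PowerMittag}, this series is precisely ${}^p\!E_{\beta,1}$ evaluated at $\frac{\psi(\alpha)v(t)(t-a)^{\beta}}{1-\phi(\alpha)v(t)}$, which yields the claimed estimate.

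The only delicate points will be (i) checking that every factor of $\ln p$, $\psi(\alpha)$, and $1-\phi(\alpha)v(t)$ in \eqref{Gronwall} is correctly absorbed into the argument of ${}^p\!E$ versus the prefactor, and (ii) making explicit that the reindexing absorbs the $n=0$ term into the infinite sum. The interchange of summation and integration is the technical point where I would invoke Lemma~\ref{impo1}, but this is routine given the local uniform convergence of the power Mittag--Leffler series on compact subsets of $[a,b)$.
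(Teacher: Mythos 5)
Your proposal is correct and follows essentially the same route as the paper: starting from \eqref{Gronwall}, bounding $\frac{u(s)}{1-\phi(\alpha)v(s)}$ by $\frac{u(t)}{1-\phi(\alpha)v(t)}$ via the monotonicity of $u$ and $v$, pulling the $t$-dependent factors out, evaluating $\int_a^t(t-s)^{n\beta-1}\,\mathrm{d}s$, and recognizing the power Mittag--Leffler series; your reading of the extra hypothesis as ``$u$ non-decreasing'' is exactly what the paper's own proof invokes, so that observation is on target. The only divergence is in the bookkeeping of the Gamma factors: your careful use of $n\beta\,\Gamma(n\beta)=\Gamma(n\beta+1)$ yields ${}^p\!E_{\beta,1}$ as the majorant, whereas the paper drops the $1/(n\beta)$ coming from the integral, keeps $\Gamma(n\beta)$, and then prints ${}^p\!E_{\alpha,\beta}$ in the statement; since neither of those matches the series actually obtained, the stated indices appear to be a typo, and your version with ${}^p\!E_{\beta,1}$ is the tighter and internally consistent form of the corollary (it also reduces correctly to the known Hattaf and Atangana--Baleanu cases). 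The interchange of sum and integral needs only nonnegativity of the terms (monotone convergence), so invoking Lemma~\ref{impo1} there is harmless but not essential.
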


\begin{proof}
By virtue of inequality \eqref{Gronwall} and the assumption 
that $u(t)$ is a non-decreasing function on $[a, b)$, 
one may write that
\begin{equation*}
\begin{split}
h(t)&\leq \dfrac{u(t)}{1-\phi(\alpha)v(t)}
+ \dfrac{u(t)}{1-\phi(\alpha)v(t)} 
\int_{a}^{t}\sum_{n=1}^{+\infty}\dfrac{(\ln p\cdot\psi(\alpha)v(t))^{n}
(t-s)^{n\beta-1}}{\Gamma(n\beta)(1-\phi(\alpha)v(t))^{n}}\, \mathrm{d}s\\
&\leq \dfrac{u(t)}{1-\phi(\alpha)v(t)}\left(1
+\sum_{n=1}^{+\infty}\dfrac{( \ln p\cdot\psi(\alpha)v(t))^{n}}{
\Gamma(n\beta)(1-\phi(\alpha)v(t))^{n}}
\int_{a}^{t}(t-s)^{n\beta-1}\, \mathrm{d}s\right)\\
&\leq \dfrac{u(t)}{1-\phi(\alpha)v(t)}\left(1+
\sum_{n=1}^{+\infty}\dfrac{( \ln p\cdot\psi(\alpha)v(t))^{n}
(t-a)^{n\beta}}{\Gamma(n\beta)(1-\phi(\alpha)v(t))^{n}}\right).
\end{split}	
\end{equation*}
Therefore,	
$$
h(t)\leq \dfrac{u(t)}{1-\phi(\alpha)v(t)}
{}^p \! E_{\alpha,\beta}\left(\dfrac{\psi(\alpha)v(t)  
(t-a)^{\beta}}{1-\phi(\alpha)v(t)}\right),
$$
which completes the proof.	
\end{proof}

\begin{remark}
Our Gronwall's inequality for the power fractional integral, 
as given in Corollary~\ref{GronwallMettag}, includes, 
as particular cases, most of existing Gronwall's inequalities 
found in the literature that involve integrals 
with non-local and non-singular kernel, such us
\begin{itemize}
\item the Gronwall's inequality in the framework 
of the Atangana--Baleanu integral \cite{Jarad}, 
obtained when $p=e$, $\omega \equiv 1$ and $\beta=\alpha$;
\item the Gronwall's inequality in the framework of 
the generalized Hattaf fractional derivative \cite{KHattaf}, 
obtained when $p=e$.
\end{itemize}
\end{remark}

\begin{corollary}
\label{GronCor}
Let $\alpha \in [0, 1)$, $\beta > 0$, and $p>0$. 
Suppose that $h$ and $u$ are non-negative and locally 
integrable functions on $[a,b)$
and $v(t) \equiv \lambda$ be such that 
$1-\lambda\phi(\alpha)>0$. If
\begin{equation}
h(t)\leq u(t)+\lambda{}^p\!I_{a,t,\omega}^{\alpha,\beta,p}h(t),
\end{equation}
then	
$$
h(t)\leq \dfrac{u(t)}{1-\lambda\phi(\alpha)}
{}^p \! E_{\alpha,\beta}\left(\dfrac{\lambda\psi(\alpha)
(t-a)^{\beta}}{1-\lambda\phi(\alpha)}\right).
$$
\end{corollary}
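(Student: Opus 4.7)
The strategy is to derive Corollary~\ref{GronCor} as an immediate specialization of Corollary~\ref{GronwallMettag} to the case of a constant comparison function. The plan is to first verify that the hypotheses of Corollary~\ref{GronwallMettag} hold when one takes $v(t)\equiv\lambda$, and then to read off the conclusion by substitution.

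For the first step, I would check one by one that $v(t)\equiv\lambda$ fits the framework of Corollary~\ref{GronwallMettag}: a constant function is trivially non-negative (the hypothesis $1-\lambda\phi(\alpha)>0$ together with $\phi(\alpha)\geq 0$ is compatible with $\lambda\geq 0$, which is the relevant sign convention here), trivially non-decreasing, continuous on $[a,b)$, and satisfies $v(t)\leq\lambda$ with equality. The standing assumption $1-\lambda\phi(\alpha)>0$ of Corollary~\ref{GronCor} translates directly into the positivity condition $1-\phi(\alpha)v(t)>0$ used in Corollary~\ref{GronwallMettag}. Finally, the integral hypothesis $h(t)\leq u(t)+\lambda\,{}^p I_{a,t,\omega}^{\alpha,\beta,p}h(t)$ is exactly the inequality $h(t)\leq u(t)+v(t)\,{}^p I_{a,t,\omega}^{\alpha,\beta,p}h(t)$ of Theorem~\ref{theo1} with $v(t)\equiv\lambda$.

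For the second step, I would substitute $v(t)\equiv\lambda$ into the conclusion of Corollary~\ref{GronwallMettag}. The prefactor $\dfrac{u(t)}{1-\phi(\alpha)v(t)}$ becomes $\dfrac{u(t)}{1-\lambda\phi(\alpha)}$, and the argument of the power Mittag--Leffler function $\dfrac{\psi(\alpha)v(t)(t-a)^{\beta}}{1-\phi(\alpha)v(t)}$ becomes $\dfrac{\lambda\psi(\alpha)(t-a)^{\beta}}{1-\lambda\phi(\alpha)}$, yielding precisely the bound stated in Corollary~\ref{GronCor}.

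I do not anticipate any genuine obstacle here: once Corollary~\ref{GronwallMettag} is in hand, Corollary~\ref{GronCor} is essentially a bookkeeping specialization, and no new estimates or series manipulations are required. The only point deserving a line of explicit verification is that a constant function qualifies as ``non-decreasing and continuous'' in the sense of Corollary~\ref{GronwallMettag}, which is immediate.
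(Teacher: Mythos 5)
Your proposal is correct and matches the paper's intent exactly: Corollary~\ref{GronCor} is stated without proof precisely because it is the specialization of Corollary~\ref{GronwallMettag} (hence of Theorem~\ref{theo1}) to the constant function $v(t)\equiv\lambda$, which is what you verify and substitute. No gap; the checks that a constant is non-negative, non-decreasing and continuous, and that $1-\lambda\phi(\alpha)>0$ gives the needed positivity, are all that is required.
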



\section{Existence and uniqueness of solutions for power FDEs}
\label{Sec5}

In this section we study sufficient conditions for the
existence and uniqueness of solution to
the power fractional initial value problem
\begin{equation}
\label{system1}
{}^p{}^C \!D_{a,t,\omega}^{\alpha,\beta,p}y(t)
=f(t,y(t)), \quad
t\in [a,b]
\end{equation}
with
\begin{equation}
\label{CndInt}
y(a)=y_{0}, 
\end{equation}
where ${}^p{}^C \!D_{a,t,\omega}^{\alpha,\beta,p}$ denotes the PFD 
of order $\alpha$, defined by \eqref{PFD}, $f:[a,b]\times\mathbb{R}
\longrightarrow \mathbb{R}$ is a continuous nonlinear function with 
$f(a, y(a))=0$ and $y_{0}\in \mathbb{R}$ is the initial condition.

\begin{lemma}
\label{lem4}
A function $y\in C([a,b])$ is a solution of \eqref{system1}--\eqref{CndInt} 
if, and only if, it satisfies the integral equation	
\begin{equation}
\label{expsol}	
y(t)=\dfrac{\omega(a)}{\omega(t)}y_{0}+{}^p\!I_{a,t,\omega}^{\alpha,\beta,p}f(t,y(t)).
\end{equation}	
\end{lemma}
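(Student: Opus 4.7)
The plan is to prove both implications directly by applying Theorem~\ref{PFDandPFI}, exploiting the fact that the PFD and PFI are mutual inverses up to the boundary term $(\omega g)(a)/\omega(t)$, and to use the hypothesis $f(a,y(a))=0$ to reconcile that boundary term with the initial condition.

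For the forward direction, assume $y\in C([a,b])$ satisfies ${}^p{}^C \!D_{a,t,\omega}^{\alpha,\beta,p}y(t)=f(t,y(t))$ with $y(a)=y_0$. I would apply the PFI ${}^p\!I_{a,t,\omega}^{\alpha,\beta,p}$ to both sides and invoke Theorem~\ref{PFDandPFI}(ii), which yields
$$
y(t)-\dfrac{(\omega y)(a)}{\omega(t)}={}^p\!I_{a,t,\omega}^{\alpha,\beta,p}f(t,y(t)).
$$
Since $(\omega y)(a)=\omega(a)y_0$, this rearranges to \eqref{expsol}.

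For the converse, assume $y$ satisfies \eqref{expsol}. The step is to apply ${}^p{}^C \!D_{a,t,\omega}^{\alpha,\beta,p}$ to both sides. For the first summand $\omega(a)y_0/\omega(t)$, I note that $\omega(t)\cdot\omega(a)y_0/\omega(t)=\omega(a)y_0$ is constant in $t$, so $(\omega\,\cdot\,)'\equiv 0$ and the PFD of that term vanishes by its integral definition \eqref{PFD}. For the second summand, Theorem~\ref{PFDandPFI}(i) gives
$$
{}^p{}^C \!D_{a,t,\omega}^{\alpha,\beta,p}\left({}^p\!I_{a,t,\omega}^{\alpha,\beta,p}f(\cdot,y(\cdot))\right)(t)=f(t,y(t))-\dfrac{\omega(a)f(a,y(a))}{\omega(t)},
$$
and the boundary term disappears because of the standing assumption $f(a,y(a))=0$. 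To close the loop, I would evaluate \eqref{expsol} at $t=a$: the Riemann--Liouville piece in the PFI vanishes at $t=a$, so ${}^p\!I_{a,t,\omega}^{\alpha,\beta,p}f(t,y(t))\big|_{t=a}=\phi(\alpha)f(a,y(a))=0$, leaving $y(a)=y_0$ as required.

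The only genuinely subtle point is the role of the hypothesis $f(a,y(a))=0$: it is needed twice in the converse direction, once to ensure that applying the PFD recovers $f(t,y(t))$ exactly (not modulo a boundary term), and once so that the representation \eqref{expsol} is consistent with the prescribed initial datum $y_0$. Apart from this, the argument is a routine two-line application of the inversion identities already established in Theorem~\ref{PFDandPFI}; no fixed-point or regularity machinery is required here.
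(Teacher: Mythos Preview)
Your proof is correct and follows essentially the same approach as the paper: both directions are handled by applying the appropriate operator and invoking Theorem~\ref{PFDandPFI}, with the standing hypothesis $f(a,y(a))=0$ used to kill the boundary term and to verify the initial condition. The only cosmetic difference is that the paper treats the converse first and leaves the vanishing of ${}^p{}^C \!D_{a,t,\omega}^{\alpha,\beta,p}\bigl(\omega(a)y_0/\omega(t)\bigr)$ implicit, whereas you make that step explicit via $(\omega\cdot)'\equiv 0$.
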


\begin{proof}
First, suppose that $y$ fulfills the integral formula \eqref{expsol}. Then,
$$
y(a)=y_{0}+{}^p\!I_{a,t,\omega}^{\alpha,\beta,p}f(a,y(a)).
$$
Since $f(a,y(a))=0$, we obtain that $y(a)=y_{0}$. 
Moreover, using the fact that $y(t)$ satisfies \eqref{expsol} 
and $(i)$ of Theorem~\ref{PFDandPFI}, it follows that
$$
{}^p{}^C \!D_{a,t,\omega}^{\alpha,\beta,p}y(t)
={}^p{}^C \!D_{a,t,\omega}^{\alpha,\beta,p}\left(
\dfrac{\omega(a)}{\omega(t)}y_{0}\right)
-\dfrac{\omega(a)f(a,y(a))}{\omega(t)}+f(t,y(t)),
$$
which implies that 
$$
{}^p{}^C \!D_{a,t,\omega}^{\alpha,\beta,p}y(t)= f(t,y(t)).
$$ 
Then $y(t)$ satisfies \eqref{system1}--\eqref{CndInt}.
	
Now, let us suppose that $y$ is a solution of the Cauchy problem 
\eqref{system1}--\eqref{CndInt}. Applying the power fractional 
integration operator to both sides of \eqref{system1}, 
and using formula $(ii)$ of Theorem~\ref{PFDandPFI}, we get
$$
y(t)=\dfrac{\omega(a)}{\omega(t)}y(a)
+{}^p\!I_{a,t,\omega}^{\alpha,\beta,p}f(t,y(t)).
$$
Therefore, since $y(a)=y_{0}$, we obtain 
formula \eqref{expsol}.
\end{proof}

\begin{theorem}
Let $y$ and $z$ be two solutions of system \eqref{system1}--\eqref{CndInt}. 
Assume that the function $f\in C([a,b]\times \mathbb{R},\mathbb{R})$ 
is Lipschitz in its second variable, that is, 
there exists a constant $L>0$ such that 
\begin{equation}
\label{Lipschitz}
|f(t,y)-f(t,z)|\leq L |y-z |, 
\quad \forall y,z \in \mathbb{R}
~ \text{ and } ~ t\in [a,b].
\end{equation}
If in addition $L<\dfrac{1}{\phi(\alpha)}$, then $y=z$. 
\end{theorem}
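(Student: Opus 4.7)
The plan is to reduce the uniqueness question to an application of the Gronwall inequality established in Corollary~\ref{GronCor}. First, I would invoke Lemma~\ref{lem4} to rewrite each solution as an integral equation:
\[
y(t)=\dfrac{\omega(a)}{\omega(t)}y_{0}+{}^p\!I_{a,t,\omega}^{\alpha,\beta,p}f(t,y(t)),\qquad
z(t)=\dfrac{\omega(a)}{\omega(t)}y_{0}+{}^p\!I_{a,t,\omega}^{\alpha,\beta,p}f(t,z(t)).
\]
Subtracting and taking absolute values, the initial terms cancel and I would use the linearity of the power fractional integral together with the Lipschitz condition \eqref{Lipschitz} to bound $|y(t)-z(t)|$ by $L$ times the power fractional integral of $|y-z|$ itself.

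Next, I would unfold the definition \eqref{PFI} of ${}^p\!I_{a,t,\omega}^{\alpha,\beta,p}$ applied to the nonnegative quantity $|f(t,y(t))-f(t,z(t))|$. Because $\omega>0$ on $[a,b]$, the weighted Riemann--Liouville integral is monotone, so one readily obtains
\[
|y(t)-z(t)|\le L\phi(\alpha)|y(t)-z(t)|+L\ln p\cdot\psi(\alpha)\,{}^R\!{}^L\!I_{a,\omega}^{\beta}|y(t)-z(t)|.
\]
The hypothesis $L<1/\phi(\alpha)$ ensures $1-L\phi(\alpha)>0$, so I can rearrange this to put it exactly in the form required by Corollary~\ref{GronCor}, namely
\[
|y(t)-z(t)|\le 0+L\,{}^p\!I_{a,t,\omega}^{\alpha,\beta,p}|y(t)-z(t)|,
\]
with $u\equiv 0$ and constant $v\equiv \lambda=L$ satisfying $1-\lambda\phi(\alpha)>0$.

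Finally, applying Corollary~\ref{GronCor} directly yields $|y(t)-z(t)|\le 0$ for every $t\in[a,b]$, since the right-hand side is identically zero when $u\equiv 0$. Consequently $y(t)=z(t)$ on $[a,b]$, which is the desired conclusion.

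The step I expect to require most care is the bookkeeping in the second paragraph: one must keep track of the two separate contributions (the pointwise term $\phi(\alpha)|y-z|$ and the nonlocal Riemann--Liouville term) and absorb the pointwise term on the left before repackaging the inequality back into power-fractional-integral form. Once that algebraic manoeuvre is done, the role of the standing assumption $L<1/\phi(\alpha)$ becomes transparent, and the rest of the argument is a direct appeal to the Gronwall machinery developed in Section~\ref{Sec4}.
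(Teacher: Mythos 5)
Your proposal is correct and follows essentially the same route as the paper: reduce via Lemma~\ref{lem4} and the Lipschitz condition to $|y(t)-z(t)|\le L\,{}^p\!I_{a,t,\omega}^{\alpha,\beta,p}|y(t)-z(t)|$, then apply Corollary~\ref{GronCor} with $u\equiv 0$ and $\lambda=L$ (the hypothesis $L<1/\phi(\alpha)$ giving $1-L\phi(\alpha)>0$) to conclude $y=z$. The intermediate unfolding and re-folding of the power fractional integral in your second paragraph is harmless but unnecessary, since the inequality is already in the form required by the corollary.
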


\begin{proof}
Let $y$ and $z$ be two solutions of problem \eqref{system1}--\eqref{CndInt}. 
By virtue of Lemma~\ref{lem4}, one has
$$
y(t)-z(t)={}^p\!I_{a,t,\omega}^{\alpha,\beta,p}\left(f(t,y(t)) -f(t,z(t))\right).
$$
Taking into account condition \eqref{Lipschitz}, it yields that
$$
|y(t)-z(t)|\leq L {}^p\!I_{a,t,\omega}^{\alpha,\beta,p}|y(t)-z(t)|.
$$	
By applying the result of Corollary~\ref{GronCor}, one obtains that
$$
|y(t)-z(t)|\leq \dfrac{0}{1-L\phi(\alpha)}
{}^p \! E_{\alpha,\beta}\left(\dfrac{L\psi(\alpha)
(t-a)^{\beta}}{1-L\phi(\alpha)}\right).
$$
It follows that $y=z$ for all $t \in [a,b]$.	
\end{proof}

\begin{theorem}
\label{theo3}
Assume that the function $f\in C([a,b]\times \mathbb{R},\mathbb{R})$ 
is Lipschitz in its second variable such that condition \eqref{Lipschitz} holds. If 	
\begin{equation}
\label{CndEU}
L \left(\phi(\alpha)+\dfrac{\ln p\cdot\psi(\alpha)
(b-a)^{\beta}}{\Gamma(\beta+1)}\right)<1,
\end{equation}		
then the Cauchy problem \eqref{system1}--\eqref{CndInt} 
has a unique solution.	
\end{theorem}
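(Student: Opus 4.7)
The plan is to recast the Cauchy problem as a fixed point equation via Lemma~\ref{lem4} and apply the Banach contraction mapping theorem on $C([a,b])$ endowed with the sup norm $\|\cdot\|$. Concretely, I would introduce the operator
\[
(Ty)(t) := \frac{\omega(a)}{\omega(t)} y_{0} + {}^p\!I_{a,t,\omega}^{\alpha,\beta,p}f(t,y(t)),
\]
so that, by Lemma~\ref{lem4}, solutions of \eqref{system1}--\eqref{CndInt} correspond exactly to fixed points of $T$. A short verification, using continuity of $f$, $\omega$, $1/\omega$ and the explicit splitting \eqref{PFI} of the PFI into a pointwise term plus a weighted Riemann--Liouville integral, shows that $T$ maps $C([a,b])$ into itself.

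The core estimate is then the computation of $\|Ty - Tz\|$ for $y,z \in C([a,b])$. Using linearity of the PFI together with \eqref{PFI}, I would write
\[
(Ty)(t) - (Tz)(t) = \phi(\alpha)\bigl(f(t,y(t))-f(t,z(t))\bigr) + \ln p \cdot \psi(\alpha)\,{}^R\!{}^L\!I_{a,\omega}^{\beta}\bigl(f(\cdot,y(\cdot))-f(\cdot,z(\cdot))\bigr)(t).
\]
Applying the Lipschitz hypothesis \eqref{Lipschitz} pointwise gives $|f(t,y(t))-f(t,z(t))| \le L\|y-z\|$, and bounding the weighted Riemann--Liouville integral by pulling $\|y-z\|$ outside and using
\[
\frac{1}{\Gamma(\beta)}\int_{a}^{t}(t-s)^{\beta-1}\,\mathrm{d}s = \frac{(t-a)^{\beta}}{\Gamma(\beta+1)} \le \frac{(b-a)^{\beta}}{\Gamma(\beta+1)}
\]
would yield
\[
\|Ty - Tz\| \le L\left(\phi(\alpha) + \frac{\ln p\cdot\psi(\alpha)(b-a)^{\beta}}{\Gamma(\beta+1)}\right)\|y-z\|.
\]
By hypothesis \eqref{CndEU} this constant is strictly less than $1$, so $T$ is a contraction on the complete metric space $C([a,b])$, and Banach's fixed point theorem produces a unique fixed point, which is the unique solution of \eqref{system1}--\eqref{CndInt}.

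The main obstacle in carrying out this plan cleanly is the weight $\omega$ inside the Riemann--Liouville kernel: the ratio $\omega(s)/\omega(t)$ that appears when estimating ${}^R\!{}^L\!I_{a,\omega}^{\beta}|y-z|(t)$ must be absorbed in order to obtain precisely the constant in \eqref{CndEU}. Since $\omega \in C^{1}([a,b])$ with $\omega>0$, the cleanest remedy is to work in the equivalent norm $\|y\|_{\omega} := \sup_{t\in[a,b]}|\omega(t)y(t)|$, which causes $\omega$ to cancel in the weighted Riemann--Liouville integral and leaves exactly $(t-a)^{\beta}/\Gamma(\beta+1)$ as the bound. Once the contraction estimate is established in $\|\cdot\|_{\omega}$, which is equivalent to the sup norm on $C([a,b])$ by positivity and continuity of $\omega$, Banach's theorem delivers existence and uniqueness of the solution $y\in C([a,b])$, completing the proof.
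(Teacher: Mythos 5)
Your proposal is correct and follows essentially the same route as the paper: both recast the problem via Lemma~\ref{lem4} as a fixed point equation for the operator $y \mapsto \dfrac{\omega(a)}{\omega(t)}y_{0} + {}^p\!I_{a,t,\omega}^{\alpha,\beta,p}f(t,y(t))$, split the PFI according to \eqref{PFI}, apply the Lipschitz condition \eqref{Lipschitz}, and invoke the Banach contraction principle under \eqref{CndEU}. The only real divergence is your use of the equivalent weighted norm $\|y\|_{\omega}=\sup_{t\in[a,b]}|\omega(t)y(t)|$: the paper estimates directly in the sup norm and writes ${}^R\!{}^L\!I_{a,\omega}^{\beta}(1)(t)\leq (t-a)^{\beta}/\Gamma(\beta+1)$, which silently drops the ratio $\omega(s)/\omega(t)$ inside the weighted Riemann--Liouville integral (valid as stated only when, e.g., $\omega$ is non-decreasing). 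Your weighted-norm device makes that step rigorous for any positive $\omega\in C^{1}([a,b])$ while yielding exactly the same contraction constant, so it is a small but genuine sharpening of the printed argument rather than a different method.
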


\begin{proof}
Let us define the operator 
$\Lambda: AC([ a, b])\longrightarrow AC([ a, b])$ as follows:
$$
(\Lambda y)(t)=\dfrac{\omega(a)}{\omega(t)}y(a)
+{}^p\!I_{a,t,\omega}^{\alpha,\beta,p}f(t,y(t)), 
\quad t\in [a, b].
$$
For all $y , z \in AC([ a, b])$ and $t \in [a, b ]$, one has 
\begin{equation*}
\begin{split}
|(\Lambda y)(t)-(\Lambda z)(t)|
&=|{}^p\!I_{a,t,\omega}^{\alpha,\beta,p}f(t,y(t))
-{}^p\!I_{a,t,\omega}^{\alpha,\beta,p}f(t,z(t))|\\
&\leq | \phi(\alpha)(f(t,y(t))-f(t,z(t)))
+ \ln p\cdot\psi(\alpha)\left({}^R\!{}^L\!I_{a,\omega}^{\beta}
f(t,y(t))-{}^R\!{}^L\!I_{a,\omega}^{\beta}f(t,z(t))\right)|\\
&\leq \phi(\alpha)|f(t,y(t))-f(t,z(t))|
+ \ln p\cdot\psi(\alpha){}^R\!{}^L\!I_{a,\omega}^{\beta }
\left|f(t,y(t))-f(t,z(t))\right|.
\end{split}	
\end{equation*}
Using the fact that $f$	satisfies the Lipschitz condition 
\eqref{Lipschitz}, we obtain that
\begin{equation*}
\begin{split}
|(\Lambda y)(t)-(\Lambda z)(t)|
&\leq L\phi(\alpha)|y-z|+ L \ln p
\cdot\psi(\alpha)|y-z|{}^R\!{}^L\!I_{a,\omega}^{\beta }(1)(t)\\
&\leq L\phi(\alpha)|y-z|+ L \ln p
\cdot\psi(\alpha)\dfrac{(t-a)^{\beta}}{\Gamma(\beta+1)}|y-z|.
\end{split}	
\end{equation*}
Therefore,
$$
\Arrowvert(\Lambda y)(t)-(\Lambda z)(t)\Arrowvert
\leq L\left( \phi(\alpha)+\ln p\cdot\psi(\alpha)
\dfrac{(b-a)^{\beta}}{\Gamma(\beta+1)}\right)
\Arrowvert y-z\Arrowvert.
$$
Hence, by virtue of \eqref{CndEU}, we conclude that $\Lambda$ 
is a contraction mapping. As a consequence of the Banach contraction 
principle, we conclude that system \eqref{system1} has a unique solution.	
\end{proof}


\section{Numerical analysis}
\label{NumAnaly}

Now we shall present a numerical method to approximate 
the solution of the nonlinear fractional differential equation
\eqref{system1} subject to \eqref{CndInt}, which is predicted
by Theorem~\ref{theo3}. Moreover, we also analyze 
the approximation error obtained from the new introduced 
scheme. Our main tool is the two-step Lagrange interpolation polynomial.


\subsection{Numerical scheme}

Consider the power nonlinear fractional differential equation 
\begin{equation}
\label{eqMain}
{}^p{}^C \!D_{a,t,\omega}^{\alpha,\beta,p}y(t) =f(t,y(t))
\end{equation}
subject to the given initial condition
$$
y(a)=y_{0}.
$$
From Theorem~\ref{PFDandPFI}, equation~\eqref{eqMain} 
can be converted into the fractional integral equation
$$
y(t)- \dfrac{\omega(a)}{\omega(t)}y(a)
=\phi(\alpha)f(t,y(t))+\ln p\cdot\psi(\alpha){}^p\!
I_{a,t,\omega}^{\alpha,\beta,p}f(t,y(t)),
$$
which implies that
\begin{equation}
\label{Err}
y(t)=\dfrac{\omega(a)}{\omega(t)}y(a)+\phi(\alpha)f(t,y(t))
+\dfrac{\ln p\cdot\psi(\alpha)}{\Gamma(\beta)}
\dfrac{1}{\omega(t)}\int_{a}^{t}(t-s)^{\beta-1}
\omega(s)f(s,y(s))\, \mathrm{d}s.
\end{equation}
Let $t_{n}=a+nh$ with $n\in\mathbb{N}$ 
and $h$ the discretization step. One has
\begin{equation*}
y(t_{n+1})=\dfrac{\omega(a)}{\omega(t_{n})}y(a)
+\phi(\alpha)f(t_{n},y(t_{n}))+\dfrac{\ln p
\cdot\psi(\alpha)}{\Gamma(\beta)}\dfrac{1}{\omega(t_{n})}
\int_{a}^{t_{n+1}}(t_{n+1}-s)^{\beta-1}\omega(s)
f(s,y(s))\, \mathrm{d}s,
\end{equation*}
which yields
\begin{equation}
\label{key2}
y(t_{n+1})=\dfrac{\omega(a)}{\omega(t_{n})}y(a)+\phi(\alpha)f(t_{n},y(t_{n}))
+\dfrac{\ln p\cdot\psi(\alpha)}{\Gamma(\beta)}\dfrac{1}{\omega(t_{n})}
\sum_{k=0}^{n}\int_{t_{k}}^{t_{k+1}}(t_{n+1}-s)^{\beta-1}g(s,y(s))
\, \mathrm{d}s
\end{equation}
with $g(s,y(s))=\omega(s)f(s,y(s))$. Function $g$ may 
be approximated over $[t_{k-1}, t_{k}]$, $k = 1, 2, \ldots, n$, 
by using the Lagrange interpolating polynomial that passes through 
the points $\left(t_{k-1}, g(t_{k-1}, y_{k-1})\right)$  
and $\left(t_{k}, g(t_{k}, y_{k})\right)$, as follows: 
\begin{equation}
\label{polyInter}
\begin{split}
P_{k}(s)&=\dfrac{s-t_{k}}{t_{k-1}-t_{k}}g(t_{k-1}, y(t_{k-1}))
+\dfrac{s-t_{k-1}}{t_{k}-t_{k-1}}g(t_{k}, y(t_{k}))\\
&\approx \dfrac{g(t_{k-1}, y_{k-1})}{h}(t_{k}-s)
+\dfrac{g(t_{k}, y_{k})}{h}(s-t_{k-1}).
\end{split}	
\end{equation}
Replacing the approximation \eqref{polyInter} 
in equation \eqref{key2}, we obtain that
\begin{equation}\label{mainEq}
\begin{split}
y_{n+1}
&=\dfrac{\omega(a)}{\omega(t_{n})}y_{0}
+\dfrac{\phi(\alpha)}{\omega(t_{n})}g(t_{n},y_{n})\\
&\qquad+\dfrac{\ln p\cdot\psi(\alpha)}{\Gamma(\beta)}
\dfrac{1}{\omega(t_{n})}\sum_{k=1}^{n}\left[ 
\dfrac{g(t_{k-1}, y_{k-1})}{h}\int_{t_{k}}^{t_{k+1}}
(t_{n+1}-s)^{\beta-1}(t_{k}-s)\, \mathrm{d}s\right.\\
&\qquad+\left.\dfrac{g(t_{k}, y_{k})}{h}
\int_{t_{k}}^{t_{k+1}}(t_{n+1}-s)^{\beta-1}
\left(s-t_{k-1}\right)\, \mathrm{d}s \right].
\end{split}	
\end{equation}
Moreover, we have
\begin{equation}
\label{Int1}
\int_{t_{k}}^{t_{k+1}}(t_{n+1}-s)^{\beta-1}(t_{k}-s)
\, \mathrm{d}s=\dfrac{h^{\beta+1}}{\beta(\beta+1)}
\left[(n-k)^{\beta}(n-k+1+\beta) -(n-k+1)^{\beta+1}\right]
\end{equation}
and
\begin{equation}
\label{Int2}
\int_{t_{k}}^{t_{k+1}}(t_{n+1}-s)^{\beta-1}(s-t_{k-1})\,\mathrm{d}s
= \dfrac{h^{\beta+1}}{\beta(\beta+1)}
\left[(n-k+1)^{\beta}(n-k+2+\beta)-(n-k)^{\beta}(n-k+2+2\beta)\right].
\end{equation}
The above equations \eqref{Int1} and \eqref{Int2} can then be included 
in equation \eqref{mainEq} to produce the following numerical scheme:
\begin{equation}
\label{mainEq2}
y_{n+1}=\dfrac{\omega(a)}{\omega(t_{n})}y_{0}
+\phi(\alpha)f(t_{n},y_{n})+\dfrac{\ln p
\cdot\psi(\alpha)h^{\beta}}{\Gamma(\beta+2)\omega(t_{n})}
\sum_{k=1}^{n} \omega(t_{k-1})f(t_{k-1}, y_{k-1})A^{\beta}_{n,k}
+\omega(t_{k})f(t_{k}, y_{k}) B^{\beta}_{n,k}
\end{equation}
with
$$ 
A^{\beta}_{n,k}=(n-k)^{\beta}(n-k+1+\beta) -(n-k+1)^{\beta+1}
$$
and
$$ 
B^{\beta}_{n,k}=(n-k+1)^{\beta}(n-k+2+\beta)-(n-k)^{\beta}(n-k+2+2\beta).
$$

\begin{remark}
The techniques used in this section are similar to the ones in \cite{HatNum} 
for the generalized Hattaf fractional derivative and in \cite{ToufAtan} 
for the Atangana--Baleanu fractional derivative.
\end{remark}


\subsection{Error analysis}

We now examine the numerical error of our developed approximation scheme~\eqref{mainEq2}.

\begin{theorem} 
Let \eqref{eqMain} be a nonlinear power fractional differential equation, 
such that $g=\omega f$ has a bounded second derivative. Then, 
the approximation error is estimated to verify
$$
\mid R^{\alpha,\beta,p}_{n}\mid
\leq\dfrac{\ln p\cdot\psi(\alpha)h^{\beta+2}}{4\Gamma(\beta+2)
\omega(t_{n})}(n+1)(n+4+2\beta)\left[(n+1)^{\beta}-\beta n^{\beta}\right]
\underset{s\in [a,t_{n+1}]}{\max}\mid g^{(2)}(s,y(s))\mid.
$$
\end{theorem}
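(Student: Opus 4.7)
The plan is to subtract the exact representation of $y(t_{n+1})$ in \eqref{Err}, with the integral split over the subintervals $[t_k, t_{k+1}]$, from the numerical update \eqref{mainEq2}. Since the only approximation invoked in passing from \eqref{key2} to \eqref{mainEq2} is the replacement of $g(s,y(s)) = \omega(s)f(s,y(s))$ on each subinterval by the linear Lagrange interpolant $P_k(s)$ whose nodes are $t_{k-1}$ and $t_k$, the residual takes the form
\[
R_n^{\alpha,\beta,p} = \frac{\ln p \cdot \psi(\alpha)}{\Gamma(\beta)\,\omega(t_n)}\sum_{k=1}^{n}\int_{t_k}^{t_{k+1}}(t_{n+1}-s)^{\beta-1}\bigl[g(s,y(s)) - P_k(s)\bigr]\, \mathrm{d}s.
\]

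Next, I would invoke the classical remainder formula for two-point Lagrange interpolation: because $g$ has a bounded second derivative, for every $s \in [t_k, t_{k+1}]$ there exists a point $\xi_k(s)$ in the convex hull of $\{s, t_{k-1}, t_k\}$ such that
\[
g(s,y(s)) - P_k(s) = \tfrac{1}{2}\, g^{(2)}\!\bigl(\xi_k(s), y(\xi_k(s))\bigr)\,(s-t_{k-1})(s-t_k).
\]
Taking absolute values and pulling out $M_g := \max_{s \in [a,t_{n+1}]}|g^{(2)}(s,y(s))|$ reduces the task to estimating the weighted sum
\[
\sum_{k=1}^{n}\int_{t_k}^{t_{k+1}}(t_{n+1}-s)^{\beta-1}(s-t_{k-1})(s-t_k)\, \mathrm{d}s.
\]

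In the last step I would select bounds on each factor so that the summation collapses into the claimed compact form. Pulling $(s-t_{k-1})$ out via the uniform upper estimate $(s-t_{k-1}) \leq t_{n+1}-a = (n+1)h$ produces the factor $(n+1)$ in the coefficient, while the remaining integrals $\int_{t_k}^{t_{k+1}}(t_{n+1}-s)^{\beta-1}(s-t_k)\, \mathrm{d}s$ are evaluated exactly via the substitution $u = t_{n+1}-s$, giving polynomial expressions in $(n+1-k)h$ and $(n-k)h$. Collecting powers of $h$ isolates the overall $h^{\beta+2}$, while summation over $k$, after partial rearrangement of the telescoping contributions, yields the dependence on $[(n+1)^\beta - \beta n^\beta]$ together with the constant $(n+4+2\beta)/[4\Gamma(\beta+2)]$.

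The main obstacle is precisely this last bookkeeping step. The conceptual backbone — a single linear-interpolation error convolved with a fractional kernel — is transparent, but matching the precise coefficients $1/4$, $(n+1)(n+4+2\beta)$ and $(n+1)^\beta - \beta n^\beta$ demands careful coordination between which factors are bounded uniformly and which are kept inside the integrals. A naive uniform bound $(s-t_{k-1})(s-t_k) \leq 2h^2$ combined with the telescoping identity $\sum_{k=1}^{n}[(n+1-k)^\beta - (n-k)^\beta] = n^\beta$ already gives a bound of the same order, but with a cleaner and different constant; to recover the form claimed one must aim for the specific intermediate estimate described above.
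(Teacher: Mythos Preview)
Your setup---the residual expression and the two-point Lagrange remainder $\tfrac12 g^{(2)}(\xi)(s-t_{k-1})(s-t_k)$---matches the paper exactly. The divergence, and the place where your sketch stalls, is the choice of which factor to extract from the integrand.

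You propose to bound $(s-t_{k-1})\le (n+1)h$ and then integrate $(t_{n+1}-s)^{\beta-1}(s-t_k)$ exactly. That is the wrong split for this particular target: it will produce a bound of the same order in $h$ but with different constants, and the arithmetic will not collapse into $(n+1)(n+4+2\beta)[(n+1)^\beta-\beta n^\beta]$. The paper instead keeps $(s-t_{k-1})$ \emph{inside} the integral and pulls out the other factor. Concretely, since $(t_{n+1}-s)^{\beta-1}(s-t_{k-1})$ is nonnegative on $[t_k,t_{k+1}]$, the weighted mean value theorem gives a $\xi_k\in[t_k,t_{k+1}]$ with
\[
\int_{t_k}^{t_{k+1}}(t_{n+1}-s)^{\beta-1}\frac{(s-t_k)(s-t_{k-1})}{2}\,g^{(2)}(\cdot)\,\mathrm{d}s
= g^{(2)}(\xi_k)\,\frac{\xi_k-t_k}{2}\int_{t_k}^{t_{k+1}}(t_{n+1}-s)^{\beta-1}(s-t_{k-1})\,\mathrm{d}s.
\]
The point of this choice is that the remaining integral is \emph{exactly} \eqref{Int2}, already computed for the scheme itself, namely $\tfrac{h^{\beta+1}}{\beta(\beta+1)}B^{\beta}_{n,k}$. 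One then bounds $|\xi_k-t_k|\le h$, uses the crude factorization
\[
B^{\beta}_{n,k}=(n-k+2+\beta)\bigl[(n-k+1)^{\beta}-\beta(n-k)^{\beta}\bigr]\le (n-k+2+\beta)\bigl[(n+1)^{\beta}-\beta n^{\beta}\bigr],
\]
and finally sums the only $k$-dependent piece, $\sum_{k=0}^{n}(n-k+2+\beta)=\tfrac12(n+1)(n+4+2\beta)$. This is precisely where the mysterious constants $\tfrac14$, $(n+1)(n+4+2\beta)$ and $(n+1)^{\beta}-\beta n^{\beta}$ originate; they are not the outcome of a telescoping sum as you conjectured, but of recycling \eqref{Int2} and then bounding $B^{\beta}_{n,k}$ rather bluntly.
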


\begin{proof} 
From \eqref{Err}, one has
\begin{equation}
\label{key3}
y(t_{n+1})=\dfrac{\omega(a)}{\omega(t_{n})}y(a)
+\phi(\alpha)f(t_{n},y(t_{n}))+\dfrac{\ln p
\cdot\psi(\alpha)}{\Gamma(\beta)}\dfrac{1}{\omega(t_{n})}
\sum_{k=0}^{n}\int_{t_{k}}^{t_{k+1}}
(t_{n+1}-s)^{\beta-1}g(s,y(s))\, \mathrm{d}s.
\end{equation}
Therefore,
\begin{multline*}
y(t_{n+1})
=\dfrac{\omega(a)}{\omega(t_{n})}y(a)+\phi(\alpha)f(t_{n},y(t_{n}))\\
+\dfrac{\ln p\cdot\psi(\alpha)}{\Gamma(\beta)}\dfrac{1}{\omega(t_{n})}
\sum_{k=0}^{n}\int_{t_{k}}^{t_{k+1}}(t_{n+1}-s)^{\beta-1}\left[P_{k}(s)
+\dfrac{(s-t_{k})(s-t_{k-1})}{2!}[g^{(2)}(s,y(s))]_{s=\xi_{s}}\right]
\, \mathrm{d}s,
\end{multline*}
which implies that
\begin{equation*}
\begin{split}
y(t_{n+1})
&=\dfrac{\omega(a)}{\omega(t_{n})}y(a)+\phi(\alpha)f(t_{n},y(t_{n}))\\
&\quad+\phi(\alpha)f(t_{n},y_{n})+\dfrac{\ln p\cdot\psi(\alpha)
h^{\beta}}{\Gamma(\beta+2)\omega(t_{n})}
\sum_{k=0}^{n} g(t_{k-1}, y_{k-1})A^{\beta}_{n,k}+g(t_{k}, y_{k})
B^{\beta}_{n,k}+R^{\alpha,\beta,p}_{n}
\end{split}	
\end{equation*}
with the remainder
$$
R^{\alpha,\beta,p}_{n}
=\dfrac{\ln p\cdot\psi(\alpha)}{\Gamma(\beta)}\dfrac{1}{\omega(t_{n})}
\sum_{k=0}^{n}\int_{t_{k}}^{t_{k+1}}(t_{n+1}-s)^{\beta-1}
\dfrac{(s-t_{k})(s-t_{k-1})}{2!}[g^{(2)}(s,y(s))]_{s=\xi_{s}}
\, \mathrm{d}s.
$$
Using the fact that function $s \mapsto (s-t_{k-1})(t_{n+1}-s)$ 
is positive on the interval $[t_{k}, t_{k+1}]$, it follows that 
there exists a $\xi_{k} \in [t_{k}, t_{k+1}]$ such that
$$
R^{\alpha,\beta,p}_{n}
=\dfrac{\ln p\cdot\psi(\alpha)}{\Gamma(\beta)}
\dfrac{1}{\omega(t_{n})}\sum_{k=0}^{n}g^{(2)}(\xi_{k},y(\xi_{k}))
\dfrac{(\xi_{k}-t_{k})}{2}
\int_{t_{k}}^{t_{k+1}}(t_{n+1}-s)^{\beta-1}(s-t_{k-1}) \, \mathrm{d}s.
$$
Using \eqref{Int2}, we obtain that
$$
R^{\alpha,\beta,p}_{n}=\dfrac{\ln p
\cdot\psi(\alpha)h^{\beta+1}}{2\Gamma(\beta+2)\omega(t_{n})}
\sum_{k=0}^{n}g^{(2)}(\xi_{k},y(\xi_{k}))(\xi_{k}-t_{k})B^{\beta}_{n,k}.
$$
Therefore,
$$
\mid R^{\alpha,\beta,p}_{n}\mid\leq\dfrac{\ln p\cdot\psi(\alpha)
h^{\beta+2}}{2\Gamma(\beta+2)\omega(t_{n})}
\underset{s\in [a,t_{n+1}]}{\max}\mid g^{(2)}(s,y(s))
\mid\cdot\left|\sum_{k=0}^{n}B^{\beta}_{n,k}\right|.
$$
Then, from formulas
\begin{equation*}
\begin{split}
B^{\beta}_{n,k}
&=(n-k+2+\beta)\left[(n-k+1)^{\beta}-\beta(n-k)^{\beta}\right]\\
&\leq(n-k+2+\beta)\left[(n+1)^{\beta}-\beta n^{\beta}\right]
\end{split}	
\end{equation*}
and
$$
\sum_{k=0}^{n}(n-k+2+\beta)=\dfrac{(n+1)(n+4+2\beta)}{2},
$$
we deduce that
$$
\mid R^{\alpha,\beta,p}_{n}\mid
\leq\dfrac{\ln p\cdot\psi(\alpha)h^{\beta+2}}{4\Gamma(\beta+2)
\omega(t_{n})}(n+1)(n+4+2\beta)\left[(n+1)^{\beta}
-\beta n^{\beta}\right]\underset{s\in [a,t_{n+1}]}{\max}
\mid g^{(2)}(s,y(s))\mid,
$$
which completes the proof.
\end{proof}


\section{Examples and simulation results}
\label{Sec7}

In this section, we begin by illustrating the suggested numerical method 
of Section~\ref{NumAnaly} with a power FDE for which we can compute 
its exact solution. Then, as a second example, we apply our main analytical 
and numerical results to a nonlinear power FDE 
for which no exact solution is known.

\begin{example}
Let us consider the following power fractional equation:	
\begin{equation}
\label{systema}
{}^p{}^C \!D_{0,t,\omega}^{\alpha,\beta,p}y(t)
= t^{2},\quad t\in [0,10]
\end{equation}
subject to
\begin{equation}
\label{ConExp1}
y(0)=0,
\end{equation} 	
where $\omega(t) \equiv 1$. By applying the power fractional integral 
to both sides of \eqref{systema} and using formula $(ii)$ 
of Theorem~\ref{theo1}, we obtain the exact solution 
of~\eqref{systema}--\eqref{ConExp1}, which is given by
\begin{equation}
y(t)=\phi(\alpha)t^{2}
+ \frac{2\ln p\cdot\psi(\alpha)}{\Gamma(\beta+3)}t^{\beta+2}.
\end{equation}
We now apply the developed numerical scheme \eqref{mainEq2} 
to approximate the solution of \eqref{systema}--\eqref{ConExp1}. 
For numerical simulations, we choose the normalization function 
$$
N(\alpha)=1-\alpha+\dfrac{\alpha}{\Gamma(\alpha)}.
$$
The comparison between the exact and approximate solutions of 
\eqref{systema}--\eqref{ConExp1} is depicted in Figures~\ref{Figure1} 
and \ref{Figure2}. 
\begin{figure}[H]
\begin{center}	
\begin{minipage}{0.46\linewidth}
\centering
\includegraphics[scale=0.57]{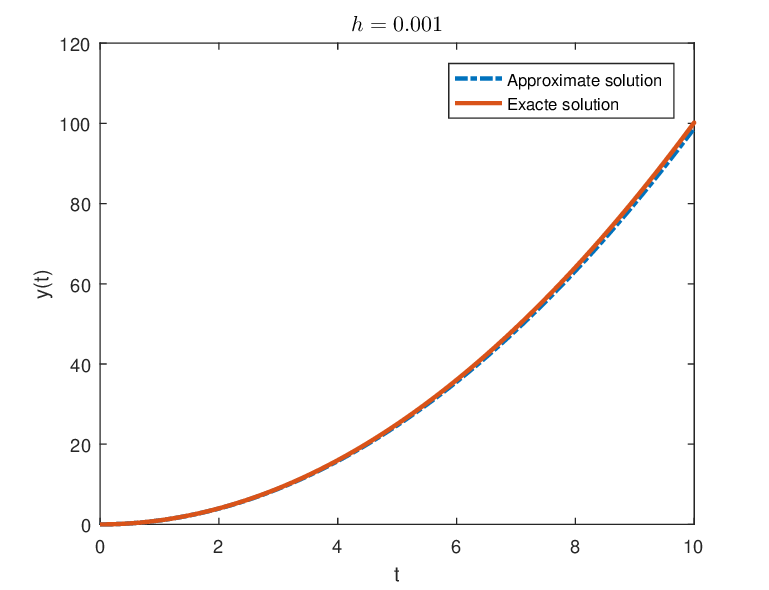}
\end{minipage}
\begin{minipage}{0.46\linewidth}
\centering
\includegraphics[scale=0.57]{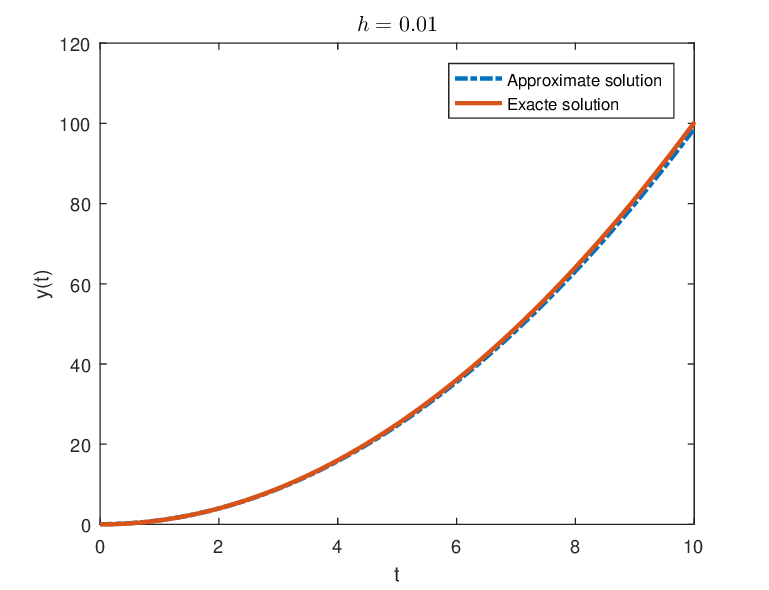}
\end{minipage}
\begin{minipage}{0.46\linewidth}
\centering
\includegraphics[scale=0.57]{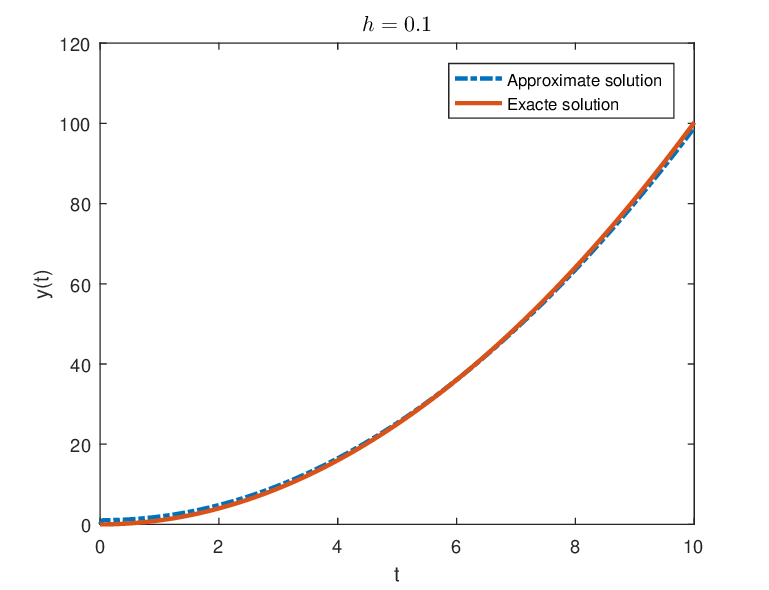}
\end{minipage}
\caption{Approximate and exact solutions of system 
\eqref{systema}--\eqref{ConExp1} for $\alpha=0.1$, 
$\beta=0.2$, $p=1.1$ and different values of the 
discretization step $h$.}
\label{Figure1}
\end{center}
\end{figure}
\begin{figure}[H]
\begin{center}	
\begin{minipage}{0.46\linewidth}
\centering
\includegraphics[scale=0.57]{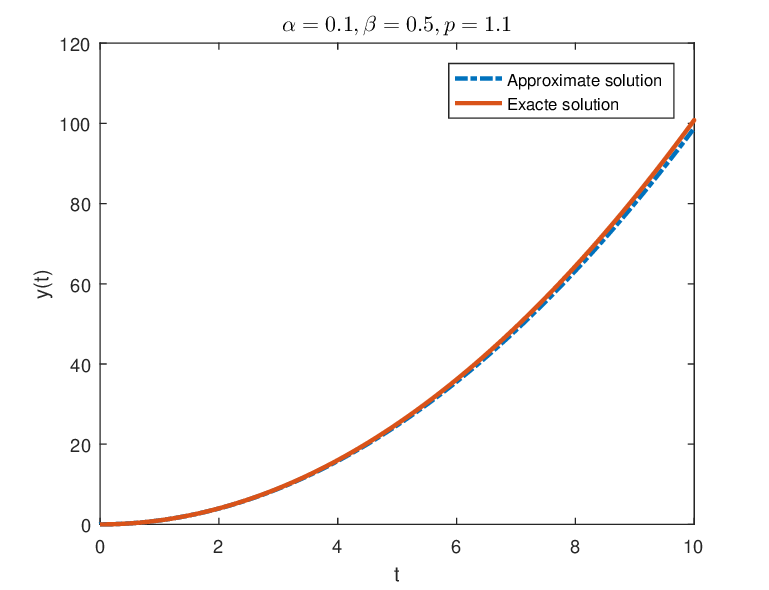}
\end{minipage}
\begin{minipage}{0.46\linewidth}
\centering
\includegraphics[scale=0.57]{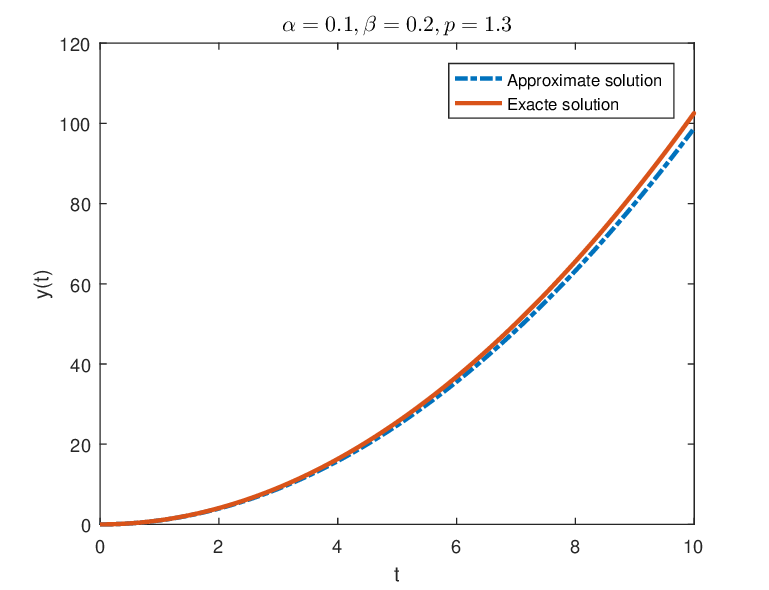}
\end{minipage}
\begin{minipage}{0.46\linewidth}
\centering
\includegraphics[scale=0.57]{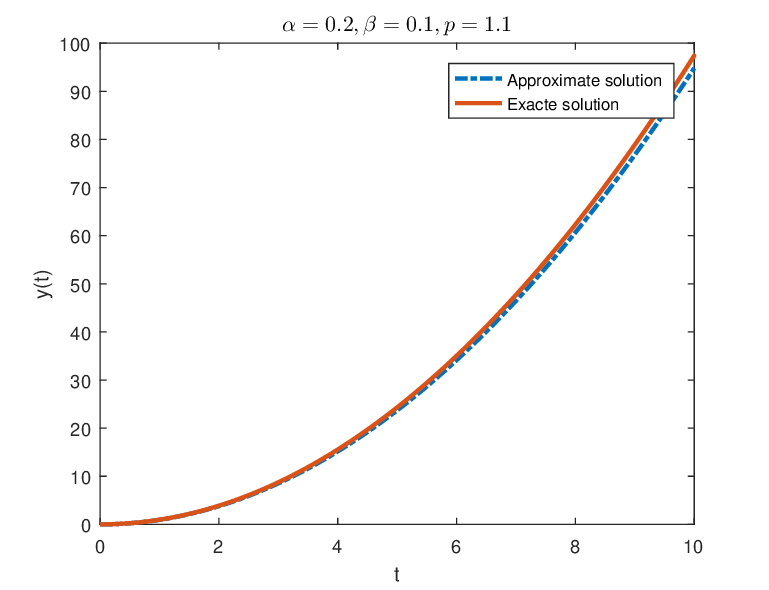}
\end{minipage}
\caption{Approximate and exact solutions of system \eqref{systema}--\eqref{ConExp1} 
for different values of $\alpha$, $\beta$ and $p$, with $h=0.001$.} 
\label{Figure2}
\end{center}
\end{figure}

The maximum error of the numerical approximations 
is given in Table~\ref{Table1}, for $\alpha=0.1$, 
$\beta=0.2$, $p=1.1$ and different values 
of the discretization step $h$.
\begin{table}[H]
\center
\caption{Maximum error corresponding to different values of $h$ 
with $\alpha=0.1$, $\beta=0.2$ and $p=1.1$.}
\label{Table1}
\begin{tabular}{l c c} \\ \hline
Discretization step ($h$) &\qquad Approximation error\\ \hline	
$h=0.1$ &\qquad $2.447\times 10^{-1}$\\
$h=0.01$  &\qquad  $ 2.080\times 10^{-2}$\\
$h=0.001$  &\qquad  $ 6.100 \times 10^{-3}$\\ \hline
\end{tabular}
\end{table}	

From Figures~\ref{Figure1} and \ref{Figure2}, we observe that 
the proposed numerical method gives a good agreement between 
the exact and approximate solutions for different value of 
$\alpha$, $\beta$, $p$ and the discretization step $h$. 
Table~\ref{Table1} shows that the convergence 
of the numerical approximation depends on the step of discretization~$h$. 
By comparing the exact and approximate solutions, we conclude 
that the new proposed numerical scheme is very efficient 
and converges quickly to the exact solution.
\end{example}

\begin{example}
Consider the following nonlinear power fractional differential equation:
\begin{equation}
\label{system2}
{}^p{}^C \!D_{0,t,\omega}^{\alpha,1,e}y(t)
=\dfrac{t^{2}}{15}\left( \dfrac{\cos(2t)}{1+|y(t)|}\right), 
\quad t\in [0,4]
\end{equation}
subject to
\begin{equation}
\label{CondEx2}   
y(0)=\sqrt\pi.
\end{equation}
This example is a particular case of problem \eqref{system1}--\eqref{CndInt} 
with $\beta=1$, $p=e$, $y_{0}=\sqrt\pi$, $a=0$, $b=4$ and 
$$
f(t,y(t))
=\dfrac{t^{2}}{15}\left(\dfrac{\cos(2t)}{1+|y(t)|}\right)
$$ 
with $f(0,y(0))=0$. Here, we choose the normalization function $N(\alpha)=1$.
	
For all  $y, z \in\mathbb{R}$ and $t\in[0, 4]$, one has 
\begin{equation*}
\begin{split}
|f(t,y)-f(t,z)|
&=\dfrac{t^{2}}{15}|\cos(3t^{2})|\left(\mid\dfrac{1}{1+|y|}
-\dfrac{1}{1+|z|}\mid\right)\\
&\leq \dfrac{1}{15}\left(|z|-|y|\right)\\
&\leq \dfrac{1}{15}|y-z|.
\end{split}	
\end{equation*}
Thus, function $f$ is continuous and satisfies the Lipschitz condition 
\eqref{Lipschitz} with $L=\dfrac{1}{15}$. Moreover, for any $\alpha \in [0,1)$, 
we have $\phi(\alpha)=1-\alpha$, $\psi(\alpha)=\alpha$ and 
\begin{equation*}
L \left(\phi(\alpha)+\dfrac{\ln p\cdot\psi(\alpha)
(b-a)^{\beta}}{\Gamma(\beta+1)}\right)
=\dfrac{1}{15} \left(1+3\alpha\right)<1.
\end{equation*}	
Hence, condition \eqref{CndEU} holds. Then, by applying Theorem~\ref{theo3}, 
it follows that problem \eqref{system2}--\eqref{CondEx2} 
has a unique solution on $[0, 4]$.
	
We now use our proposed method to solve the system \eqref{system2}--\eqref{CondEx2}. 
For numerical simulations, we take the weight function $\omega(t)=t+2$.
	
The approximate solution of \eqref{system2}--\eqref{CondEx2} is displayed 
in Figures~\ref{figa} and \ref{figb} for different values of 
$\alpha$, $\beta=1$ and $p=e$, using two discretization steps: 
$h=0.1$ and $h=0.01$.
\begin{figure}[H]
\centering
\includegraphics[scale=0.65]{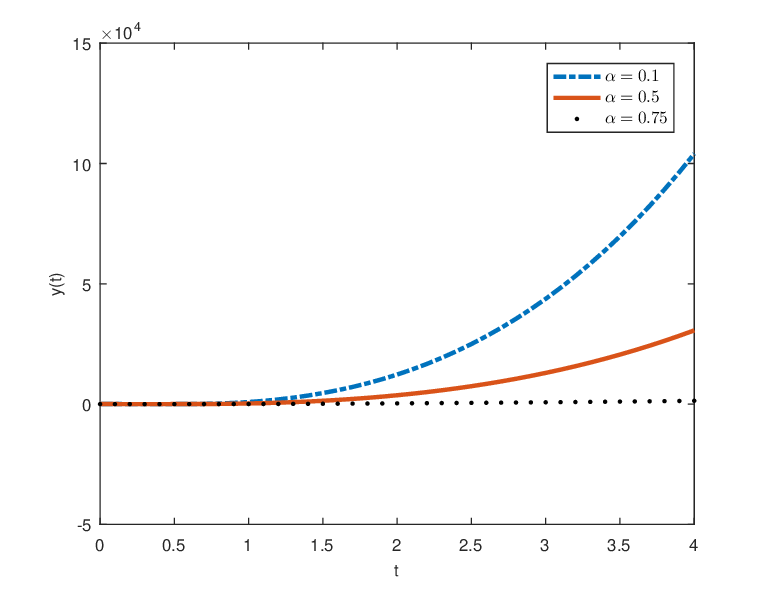}
\caption{Approximate solution of system \eqref{system2}--\eqref{CondEx2} 
for  different values of $\alpha$, $\beta=1$, $p=e$ and $h=0.1$.}
\label{figa}
\end{figure}
\begin{figure}[H]
\centering
\includegraphics[scale=0.65]{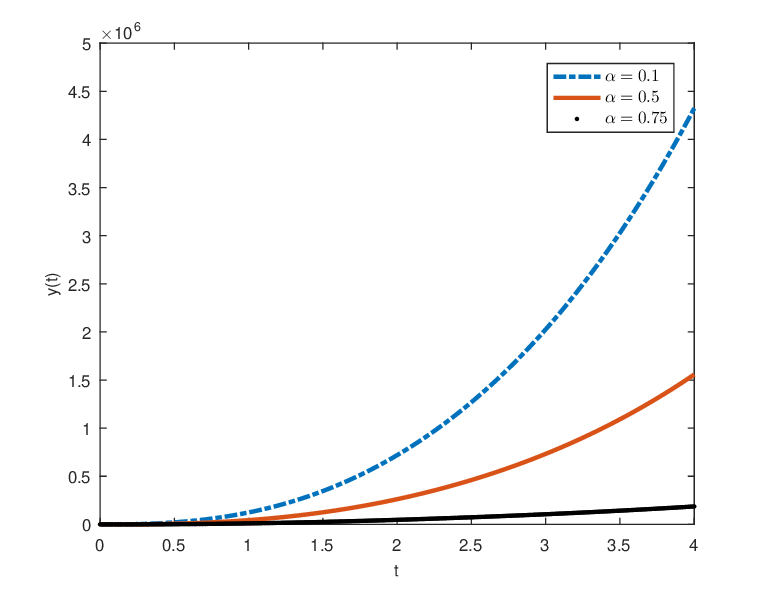}
\caption{Approximate solution of system \eqref{system2}--\eqref{CondEx2} 
for different values of $\alpha$, $\beta=1$, $p=e$ and $h=0.01$.}
\label{figb}
\end{figure}
\end{example}


\section{Conclusion}
\label{sec:conc}

In this paper, (i) we established a new formula 
for the power fractional derivative with a non-local and non-singular 
kernel in the form of an infinite series of the standard weighted Riemann--Liouville 
fractional integral. This brings out more clearly the non-locality properties 
of the fractional derivative and makes it easier to handle certain computational aspects. 
By means of the proposed formula, we derived useful properties of the power fractional operators, 
for example the Newton--Leibniz formula has been rigorously extended. 
(ii) We presented a new version of Gronwall's inequality via the power fractional 
integral, which includes many versions of Gronwall's inequality 
found in the literature, such us the generalized Hattaf and Atangana--Baleanu 
fractional Gronwall's inequalities. (iii) We proved the existence and uniqueness 
of solutions to nonlinear power fractional differential equations 
using the fixed point principle; and, based on Lagrange 
polynomial interpolation, (iv) we provided a new explicit 
numerical method to approximate the solutions of power FDEs 
with the approximation error being also examined. However, 
we only presented a bound for the error and the proof
of the convergence of the numerical scheme is still an open problem.
Numerical examples and simulation results were discussed and show 
that our developed method is very efficient, highly accurate, 
and converges quickly.

As future work, we aim to apply our obtained analytical 
and numerical results to develop power fractional models describing 
real world phenomena such us the world population growth 
and the dynamics of an epidemic disease. This issue is currently 
under investigation and will appear elsewhere.


\section*{Acknowledgements}

Zitane and Torres are supported by The Center 
for Research and Development in Mathematics and Applications (CIDMA)
through the Portuguese Foundation for Science and Technology 
(FCT -- Funda\c{c}\~{a}o para a Ci\^{e}ncia e a Tecnologia),
project UIDB/04106/2020. Zitane is also grateful to the post-doc 
fellowship at CIDMA-DMat-UA, reference UIDP/04106/2020.



\end{document}